%On donne ici le contenu de \cite{COUSIN:2012:HAL-00659358:2} avec quelques légères modifications. \label{tetra6}
\documentclass[a4paper,11pt]{smfart}

%***************packages*******************

\usepackage{multirow,yfonts}
\usepackage{dsfont}
\usepackage[applemac]{inputenc}
\usepackage[francais]{babel}
\usepackage{smfthm}
\usepackage{amssymb,amsmath, amsfonts}
\usepackage{stmaryrd}
\usepackage{xypic}
\usepackage{graphicx}
\usepackage{chngpage}
\usepackage[toc,page]{appendix} 
\usepackage{subfigure}
\usepackage{color}

% Mise en page latex et police
%\usepackage{vmargin}
\definecolor{gris}{gray}{0.45}
%*****************commandes et environnements*********************
 
  \newtheorem*{theo*}{Théorème}
  \newtheorem*{coro*}{Corollaire}
           
\newcommand {\Q}{\mathbb{Q}}

\newcommand{\Hhyp}{\mathbb{H}}
\newcommand {\N}{\mathbb{N}}

\newcommand{\kod}{\mathbf{kod}}
\newcommand{\F}{\mathcal{F}}
\newcommand{\calH}{\mathcal{H}}
\newcommand{\calM}{\mathcal{M}}
\newcommand{\calO}{\mathcal{O}}
\newcommand{\G}{\mathcal{G}}
\newcommand{\Ft}{\tilde{\mathcal{F}}}

\newcommand{\num}{\stackrel{num}{=}}
\newcommand{\calR}{\mathcal{R}}
\newcommand{\R}{\mathbb{R}}
\newcommand{\C}{\mathbb{C}}

\newcommand{\Z}{\mathbb{Z}} 
 
\newcommand{\calC}{\mathcal{C}} 

\newcommand{\Pd}{\mathbb{P}^2} 
\newcommand{\Pu}{\mathbb{P}^1} 
\newcommand{\fonction}[5]{$$\begin{array}{crcl} {#1} :&{#2}&\longrightarrow &   {#3} \\
                         &{#4}    &\longmapsto     &{#5}
                      \end{array}$$}

\begin{document}
\title[Un exemple de feuilletage modulaire]{Un exemple de feuilletage modulaire déduit d'une solution algébrique de l'équation de Painlevé VI}
\author{Ga\"el Cousin}
\date{}

\setcounter{tocdepth}{2}
%\tableofcontents
\address{CNRS-IRMAR UNIVERSITÉ DE RENNES 1, CAMPUS DE BEAULIEU  35042 RENNES CEDEX}
\email{gael.cousin@univ-rennes1.fr}
\keywords{feuilletages holomorphes, dimension de Kodaira, surfaces modulaires de Hilbert, connexions plates, équation de Painlevé VI}

\vspace{-3 cm}
\maketitle
\vspace{-0.5 cm}

\vspace{0.5 cm}
\emph{\`A la mémoire de Marco Brunella.}\\

\begin{abstract}
     On peut construire facilement des exemples de connexions plates de rang $2$ sur $\Pd$ comme tirés en arrière de connexions sur $\Pu$.
On donne un exemple de connexion qui ne peut être obtenue de cette manière. Cet exemple est construit à partir d'une solution algébrique de l'équation de Painlevé VI. On en déduit un feuilletage modulaire. La preuve de ce fait repose sur la classification des feuilletages sur les surfaces projectives par leurs dimensions de Kodaira, fruit du travail de Brunella, McQuillan et Mendes.
On décrit ensuite le feuilletage dual.
Par une analyse fine de monodromie, on voit que notre surface bifeuilletée est revêtue par la surface modulaire de Hilbert construite en faisant agir $\mathrm{PSL}_2(\Z[\sqrt{3}])$ sur le bidisque.
\end{abstract}
\vspace{-0.5cm}
\section{Introduction}

Les feuilletages modulaires sont des feuilletages naturels sur les quotients du bidisque par des réseaux irréductibles $\Gamma$ de $\mathrm{PSL_2}(\R)\times\mathrm{PSL_2}(\R)$. Ils sont induits par les feuilletages horizontaux et verticaux du bidisque et sont naturellement munis de structures transversalement projectives. On peut se donner un tel $\Gamma$ par le choix d'un nombre réel quadratique, voir section $2.1$.

 Dans une classification récente de Brunella, McQuillan et Mendes, parmi les feuilletages sur les surfaces projectives, les feuilletages modulaires sont caractérisés par leurs dimensions de Kodaira : $\kod=-\infty$ et $\nu=1$ ; ce sont deux invariants numériques codant les propriétés de tangence du feuilletage.
 
Dans \cite{MR2142243}, Mendes et Pereira donnent les premiers exemples de modèles birationnels explicites pour des feuilletages modulaires. La découverte de ces feuilletages est fondée sur une bonne connaissance de la surface sous-jacente. Il apparaît que les structures transversalement projectives de deux de ces exemples (les feuilletages $\calH_2$ et $\calH_3$ associés à $\sqrt{5}$ dans \cite{MR2142243}) correspondent à des déformations isomonodromiques de feuilletages de Riccati à quatre pôles sur $\Pu \times \Pu \rightarrow \Pu$, c'est à dire à deux solutions de l'équation de Painlevé VI (PVI). Ces solutions sont, par construction, algébriques ; elles sont des transformées d'Okamoto de solutions icosahédrales de Dubrovin-Mazzocco \cite{MR1767271}, les solutions n°$31$ et $32$ de la liste de Boalch \cite{MR2254812}. 

L'objectif principal de cet article est de produire un exemple de feuilletage modulaire en inversant cette construction : le choix d'une solution de l'équation de Painlevé VI paramétrée par une courbe $\calC$ donne un feuilletage de Riccati $\calR$ sur un $\Pu$-fibré $P \rightarrow \Pu\times \calC$ ; en choisissant une section de ce fibré, on se donne un feuilletage transversalement projectif $\F$ sur la surface $\Pu\times \calC$ ; on souhaite que ce dernier soit un feuilletage modulaire.

 Le choix de la solution de (PVI) est bien sûr guidé par les propriétés des feuilletages modulaires : on a des contraintes sur la monodromie de $\calR$ et, d'après Touzet \cite[Théorème III$.2.6.$]{MR2008442}, $\F$ ne doit pas être tiré-en-arrière rationnel d'un feuilletage de Riccati $\calR_0$ sur une surface algébrique. Cherchant à obtenir un des feuilletages modulaires sur la surface rationnelle associée à $\sqrt{3}$, on a ainsi été conduit sans ambiguïté à la solution utilisée à la section \ref{non pb}.
On a trouvé un groupe de symétries d'ordre $4$ pour le feuilletage de Riccati $\calR$ construit à partir de cette solution. Soit $\tilde{\calR}$ le feuilletage de Riccati quotient. En choisissant une section du $\Pu$-fibré sous-jacent à $\tilde{\calR}$, on a obtenu le feuilletage $\F_{\omega}$ décrit ci-dessous.

\begin{theo} \label{thprinc}
  La surface bifeuilletée  $(\Pd,\{\F_{\omega},\G_{\tau}\})$ est un modèle birationnel d'une surface modulaire munie de ses feuilletages modulaires, où \\
  $\omega=-12y(1+3y)(3x-y)dx+\left [(10-18x)y^2-9x(18x-5)y-9x^2(9x-2) \right ]dy$ et\\
  $\tau=-12y(1+3y)(12x-y-3)dx\\+ \left [(4-18x)y^2-3(18x-1)(3x-1)y+9x(2-9x)(x-1)\right ]dy$.\\
  De plus, l'involution birationnelle de $\Pd$ suivante échange $\F$ et $\G$.
   \center{$\sigma : (x,y) \mapsto \left({\frac {3\,y \left( 3\,y+13 \right) x-y \left( 7\,y+9 \right) }{
 \left( 135\,y+9 \right) x-3\,y \left( 3\,y+13 \right) }},y \right).$}
\end{theo}
La preuve de ce théorème est fondée sur un calcul de dimensions de Kodaira.

\paragraph*{}
Après un revêtement double, on obtient les feuilletages modulaires de Hilbert construits en faisant agir $\Gamma=\mathrm{PSL}_2(\Z[\sqrt{3}])$ sur le bidisque (voir théorème~\ref{racine}). On peut en fait voir que la surface feuilletée du théorème \ref{thprinc} est celle construite à partir de l'extension de Hurwitz-Maass de 
$\mathrm{PSL}_2(\Z[\sqrt{3}])$, cf lemme~\ref{monodromie}.

Dans \cite[Theorem $2$ p $1273$]{MR2457528}, Corlette et Simpson établissent un résultat de factorisation pour certaines représentations $\rho : \pi_1(X) \rightarrow \mathrm{PSL}_2(\C)$ de groupes fondamentaux de variétés quasi-projectives : si la représentation ne se factorise pas par une courbe, alors elle est tiré-en-arrière  par une application $f : X \rightarrow Y$ d'une des représentations tautologiques d'un quotient $Y$ d'un polydisque. Ils manifestent leur intérêt pour la détermination d'une telle application $f$ pour les représentations de monodromie des feuilletages de Riccati obtenus à partir de solutions algébriques de (PVI). C'est ce que nous avons fait ici pour notre feuilletage initial $\calR$, avec $Y$ une surface modulaire.

Pour les propriétés générales des feuilletages sur les surfaces on se réfère à \cite{MR2114696}. Pour des propriétés particulières des feuilletages modulaires, on pourra consulter \cite{MR2142243}. Pour les notions relatives aux feuilletages transversalement projectifs, on recommande \cite{MR2337401} et \cite{MR2324555}. Pour une introduction aux connexions logarithmiques plates, voir \cite{yaknov}. Enfin, pour les calculs de groupes fondamentaux, on propose \cite{shimada} ou \cite{MR2830090}.
Par commodité, on a employé fréquemment un logiciel de calcul formel pour nos discussions. Toutefois, ce recours au calcul formel ne s'avère strictement nécessaire que pour ce qui est décrit à la section \ref{quotient}.   

L'auteur tient à remercier chaleureusement son directeur de thèse Frank Loray ainsi que Jorge Pereira pour leurs nombreuses indications. Remerciements également aux membres de l'équipe de géométrie analytique de l'IRMAR, à Philip Boalch, Serge Cantat, Slavyana Geninska et à Pierre Py.
On remercie aussi le CNRS pour son financement de thèse, l'IRMAR pour son accueil permanent et l'IMPA pour un séjour fructueux. Les commentaires du referee ont permis une grande amélioration de ce travail. Qu'il en soit remercié. 

\subsection{Surfaces modulaires, feuilletages modulaires}
%Les surfaces modulaires de Hilbert ont été introduites par Hilbert, compactifiées  en des surfaces projectives par  et désingularisées par Hirzebruch(\cite[\S 2]{MR0393045} et \cite{MR0062842} ).\cite{MR930101}
Soit $K=\Q(\sqrt{d})$ avec $d\in \N^*$ sans facteur carré. Soit $\mathcal{O}_K$ l'anneau d'entiers de $K$.
Les deux plongements de $K$ dans $\R$ induisent deux plongements de $\mathrm{PSL_2}(\mathcal{O}_K)$ dans $\mathrm{PSL_2}(\R)$ : $\gamma \mapsto A$ et $\gamma \mapsto \bar{A}$, où $A \mapsto \bar{A}$ est l'action du groupe de Galois de $K$.
On obtient un plongement $ i : \mathrm{PSL_2}(\mathcal{O}_K) \rightarrow \mathrm{PSL_2}(\R)^2, \gamma \mapsto (A,\bar{A})$. Soit $\Gamma_K$ l'image de $i$.

D'après Baily-Borel \cite[Theorem 10.11]{MR0216035} Le quotient $X_K=(\Hhyp \times \Hhyp)/\Gamma_K$ est une surface complexe singulière qui se compactifie en une surface projective $Y_K$ par adjonction d'un ensemble fini de points (cusps) aux bouts de $X_K$.
Les singularités de $X_K$ sont des singularités "quotients cycliques", on les appelle aussi singularités de Hirzebruch-Jung. Leur désingularisation a été donnée par Hirzebruch \cite{MR0062842}, voir aussi \cite[\S IV]{MR570310}. Cette désingularisation consiste à remplacer chaque point singulier par une chaîne adéquate de courbes rationnelles lisses : une chaîne de Hirzebruch-Jung.

Chaque cusp donne aussi lieu à une singularité de $Y_K$, la désingularisation des cusps à été expliquée par Hirzebruch dans \cite[\S 2]{MR0393045}.
Cette fois, chaque singularité est remplacée par un cycle (contractile) de courbes rationnelles lisses. On notera $Z_K$ ou $Z_{\sqrt{d}}$ la surface projective lisse ainsi obtenue, cette surface est appelée la surface modulaire de Hilbert associée à $K$.

Les surfaces $Z_{\sqrt{d}}$ ont été intensément étudiées, par exemple Hirzebruch, Van de Ven et Zagier \cite{MR0480356} ont démontré que $Z_{\sqrt{d}}$ est rationnelle exactement pour $d \in \{ 2,3,5,6,7,13,15,17,21, 33\}.$
Il existe aussi des tables donnant le nombre et le type des singularités de Hirzebruch-Jung en fonction de $d$. Le nombre de cusp est le nombre de classes de $K$, pour une exposition détaillée des propriétés de $Z_{\sqrt{d}}$, se référer à \cite{MR930101}.
%Dans son article \cite{MR2071237}, Brunella définit des surfaces plus générales.
%\begin{defi}[Brunella]\label{modulaire}
%Soit $\Gamma$ un sous-groupe discret de $\mathrm{PSL}_2(\R)^2$ non commensurable à un produit $\Gamma_1 \times \Gamma_2$ de sous-groupes de $\mathrm{PSL}_2(\R)$ et tel que $X_{\Gamma}=(\Hhyp \times \Hhyp)/\Gamma$ est compact ou peut se compactifier en une surface projective comme les surfaces précédentes, c.-à-d. par adjonction, à chaque bout, d'un cycle contractile de courbes rationnelles lisses. Les seules singularités de la surface ainsi compactifiée sont alors des singularités de Hirzebruch-Jung. Après leur désingularisation, on obtient une surface lisse qu'on appelle surface modulaire et note $Z_\Gamma$.
%\end{defi}

Dans son article \cite{MR2435846}, McQuillan considère des surfaces un peu plus générales.
\begin{defi}\label{modulaire}
Soit $\Gamma$ un réseau de $\mathrm{PSL}_2(\R)^2$ non commensurable à un produit $\Gamma_1 \times \Gamma_2$ de sous-groupes de $\mathrm{PSL}_2(\R)$. La surface $X_{\Gamma}=(\Hhyp~\times~\Hhyp)/\Gamma$ est compacte ou peut se compactifier en une surface projective comme les surfaces précédentes, c.-à-d. par adjonction, à chaque bout, d'un cycle contractile de courbes rationnelles lisses. Les seules singularités de la surface ainsi compactifiée sont alors des singularités de Hirzebruch-Jung. Après leur désingularisation, on obtient une surface lisse qu'on appelle surface modulaire et note $Z_\Gamma$.
\end{defi}
\begin{rema} Dire que deux sous groupes $H,H'$ d'un groupe $G$ sont commensurables signifie qu'il existe $g\in G$ tel que $H\cap gH'g^{-1}$  soit d'indice fini dans $H$ et dans  $gH'g^{-1}$.

D'après les travaux de Margulis sur l'arithméticité des réseaux, dans le cas où $X_{\Gamma}$ n'est pas compact, le groupe $\Gamma$ est commensurable à un $\Gamma_K$ (dans $\mathrm{PSL}_2(\R)^2$).
\end{rema}

\begin{defi}
Les feuilletages modulaires de $Z_\Gamma$ sont les images des feuilletages verticaux et horizontaux de $\Hhyp^2$. On les note $(\F_{\Gamma},\G_{\Gamma})$. Si $Z_\Gamma=Z_K$ est une surface modulaire de Hilbert, alors $\F_{\Gamma}$ et $\G_{\Gamma}$ sont  appelés feuilletages modulaires de Hilbert et notés $\F_K$ et $\G_K$.
\end{defi}
La principale source d'intérêt pour les feuilletages modulaires et leur rôle dans la classification birationnelle des feuilletages, nous y reviendrons à la section \ref{strategie}. 
%La surface modulaire $(Y_{\Gamma},\F_{\Gamma},\G_{\Gamma})$ se désingularise en $(\tilde{Y}_{\Gamma},\Ft_{\Gamma},\Gt_{\Gamma})\rightarrow (Y_{\Gamma},\F_{\Gamma},\G_{\Gamma})$ en remplaçant ses singularités par des chaînes de courbes rationnelles, les chaînes de Hirzebruch-Jung, c'est un travail de Hirzebruch \cite{MR0062842}.

Les feuilletages modulaires sont naturellement munis de structures transversalement projectives. 
\subsection{Feuilletages transversalement projectifs, feuilletages de Riccati}
La définition suivante est dûe à \cite{MR2337401} et équivalente à celle de \cite{MR1432053}.
\begin{defi}
Soit $\calH$ un feuilletage de codimension $1$ sur une variété complexe lisse $M$. Une \textbf{structure transversalement projective} (singulière) pour $\calH$ est la donnée d'un triplet $(\pi,\calR,\sigma)$ consistant en
\begin{enumerate}
\item  \label{un} un $\Pu$-fibré   $\pi : P \rightarrow M$;
\item \label{deux} un feuilletage holomorphe singulier $\calR$ de codimension $1$ sur $P$ transverse à la fibre générale de $\pi$ et 
\item une section méromorphe  $\sigma$ de $\pi$  telle que $\calH=\sigma^*\calR$.
\end{enumerate}
En présence des conditions \ref{un} et \ref{deux}, on dit que $\calR$ est un \textbf{feuilletage de Riccati} sur le fibré $\pi$.
\end{defi}
\begin{defi}
Soient $\calR$ un feuilletage de Riccati sur $\pi$, $D$ est un diviseur de $M$ tel que $\calR_{\vert M \setminus D}$ est transverse aux fibres de $\pi_{\vert M \setminus D}$ et $* \in M\setminus D$. Par compacité des fibres, en relevant les lacets de $M \setminus D$ dans les feuilles de $\calR$, on peut définir une représentation $\pi_1(M \setminus D,*)\rightarrow Aut(\pi^{-1}(*))$. Si $D$ est le plus petit diviseur ayant cette propriété, on l'appelle le \textbf{lieu polaire} de $\calR$ et la représentation est appelée \textbf{représentation de monodromie} de $\calR$, l'image de cette représentation est le \textbf{groupe de monodromie} de $\calR$.
\end{defi}
En pratique, on doit choisir une coordonnée sur $\pi^{-1}(*)$ et la monodromie est donnée par des éléments de $\mathrm{PSL}_2(\C)$.

\begin{defi}\label{reduction}
Soient $\calC_1\subset \calC_2$ deux hypersurfaces sur  une variété complexe lisse $M$ et $G$ un groupe.
Si $\rho : \pi_1(M\setminus \calC_2)\rightarrow G$ se factorise par $i_* :\pi_1(M\setminus \calC_2)\rightarrow \pi_1(M\setminus \calC_1)$ : $\rho=\rho_1 i_*$, on dit que $\rho$ se réduit à $M\setminus \calC_1$ et que $\rho_1$ est une réduction partielle de $\rho$. Si $\calC_1$ est minimal pour l'inclusion avec cette propriété, on dit que $\rho_1$ est la réduction de $\rho$ et que $\rho_1$ est réduite. Si, de plus, $M$ est simplement connexe, on appelle $\calC_1$ le support de $\rho$. 
\end{defi}

Le résultat suivant explique notre intérêt pour la notion de feuilletage transversalement projectif.
\begin{lemm}\label{lemmemonodmodulaire} Soit $Z_{\Gamma}$ une surface modulaire. 
Soit $\mathcal{V}$ le complémentaire dans $Z_{\Gamma}$ des cycles de courbes rationnelles et des chaînes de Hirzebruch-Jung apparus dans le processus de compactification/désingularisation de $X_{\Gamma}$.
La projection $\Hhyp \times \Hhyp \rightarrow X_{\Gamma}$ donne, par restriction, le revêtement universel de $\mathcal{V}$: $\mathcal{U} \rightarrow \mathcal{V}$.
On y associe sa représentation tautologique $\tau : \pi_1(\mathcal{V})\rightarrow \Gamma$.

Soit $\tau_i$ la projection de $\tau$ sur le $i$-ème facteur de $\mathrm{PSL_2}(\R)^2$. Les feuilletages $\F_{\Gamma \vert \mathcal{V}}$ et $\G_{\Gamma \vert \mathcal{V}}$ sur $\mathcal{V}$ possèdent des structures transversalement projectives de monodromies respectives $\tau_1$ et $\tau_2$.

\begin{proof}
Il suffit de faire la preuve pour $\F_{\Gamma}$. Soient  $(u_1,u_2)$ les coordonnées naturelles sur $\mathcal{U}\subset \Hhyp \times \Hhyp$. Soit $\sigma=\{z=u_1\}$ une section de $\Pu \times  \mathcal{U}\rightarrow \mathcal{U}$.  En fixant $(z,u)\cdot \gamma=(\gamma_1^{-1}\cdot z  ,\gamma^{-1} \cdot u)$ pour tout $\gamma=(\gamma_1,\gamma_2) \in \mathrm{PSL}_2(\R)^2$ et  tout $(z,u) \in \Pu \times \mathcal{U}$, on définit une action proprement discontinue de $\Gamma$ sur $\Pu \times  \mathcal{U}$. Le quotient $P=(\Pu \times  \mathcal{U})/\Gamma$ est muni d'une structure de $\Pu$-fibré $\pi : P \rightarrow X$ dont une section holomorphe $\tilde{\sigma}$ est induite par $\sigma$. De surcroît, le feuilletage de Riccati sur $\Pu \times  \mathcal{U}$ défini par $dz=0$ passe au quotient et fournit un feuilletage de Riccati $\calR$ de telle sorte que $(\pi,\calR,\tilde{\sigma})$ soit une structure transversalement projective pour la restriction de $\Ft_{\Gamma}$ à $X$. Par construction cette structure a pour monodromie la projection $\tau_1$ de la représentation tautologique du quotient  $\mathcal{U}/\Gamma$.
\end{proof}
\end{lemm}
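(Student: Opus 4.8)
The plan is to manufacture the transversally projective structure on the universal cover, where everything is explicit, and then push it down to $\mathcal{V}$ by means of a $\Gamma$-equivariant model. The two factors of $\Hhyp \times \Hhyp$ are interchanged by the involution $(u_1,u_2)\mapsto(u_2,u_1)$, which also interchanges the horizontal and vertical foliations and the two projections $\tau_1,\tau_2$; so it suffices to treat $\F_\Gamma$ and deduce the statement for $\G_\Gamma$ by symmetry. I write $(u_1,u_2)$ for the natural coordinates on $\mathcal{U}\subset\Hhyp\times\Hhyp$ and fix the convention that $\F_\Gamma$ is the image of the foliation $\{du_1=0\}$, so that its lift to $\mathcal{U}$ has leaves $\{u_1=\mathrm{const}\}$. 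Recall that, since $\mathcal{U}\to\mathcal{V}$ is the universal cover, $\Gamma$ acts freely and properly discontinuously on $\mathcal{U}$ (the elliptic fixed points having been removed in passing to $\mathcal{V}$) and $\tau\colon\pi_1(\mathcal{V})\to\Gamma$ is an isomorphism.

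On $\mathcal{U}$ I would take the trivial bundle $\Pu\times\mathcal{U}\to\mathcal{U}$ with fiber coordinate $z$, the Riccati foliation $\calR$ given by $dz=0$, and the section $\sigma(u)=(u_1,u)$ obtained by setting $z=u_1$ (viewing $\Hhyp\subset\Pu$). Since $\sigma^*(dz)=du_1$, the foliation $\sigma^*\calR$ is exactly $\{du_1=0\}$, the lift of $\F_\Gamma$. The crux of the construction is to lift the $\Gamma$-action to $\Pu\times\mathcal{U}$ so that both $\calR$ and $\sigma$ become equivariant. For $\gamma=(\gamma_1,\gamma_2)\in\Gamma$ I set $(z,u)\cdot\gamma=(\gamma_1^{-1}\cdot z,\gamma^{-1}\cdot u)$, where $\gamma_1^{-1}$ acts on $z$ by its M\"obius transformation and $\gamma^{-1}=(\gamma_1^{-1},\gamma_2^{-1})$ acts coordinatewise on $u$. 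This is a right action; it makes $\sigma$ equivariant because the fiber component $\gamma_1^{-1}\cdot u_1$ of $\sigma(\gamma^{-1}\cdot u)$ matches the action on $z$, and it preserves $\calR$ because the M\"obius action on $z$ merely permutes the leaves $\{z=\mathrm{const}\}$.

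Since the action on $\mathcal{U}$ is free and properly discontinuous and $\Pu$ is compact, the lifted action on $\Pu\times\mathcal{U}$ is again free and properly discontinuous, so the quotient $P=(\Pu\times\mathcal{U})/\Gamma$ is a locally trivial $\Pu$-bundle $\pi\colon P\to\mathcal{V}$; the foliation $\calR$ and the section $\sigma$ descend to $\tilde\calR$ and $\tilde\sigma$, and by construction $\tilde\sigma^*\tilde\calR=\F_{\Gamma \vert \mathcal{V}}$, so $(\pi,\tilde\calR,\tilde\sigma)$ is the desired transversally projective structure. The step I expect to require the most care is the identification of its monodromy with $\tau_1$: one lifts a loop representing $\gamma\in\pi_1(\mathcal{V})\cong\Gamma$ into a leaf of $\tilde\calR$ and reads off the resulting automorphism of the fiber over the basepoint, which is precisely the gluing transformation coming from the $\Gamma$-action on the $\Pu$-factor, namely the M\"obius action of the first factor $\gamma_1$. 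Thus the monodromy is $\tau_1$. The genuinely delicate point is purely bookkeeping with conventions---whether the fiber acts through $\gamma_1$ or $\gamma_1^{-1}$, and in which direction loops are lifted---but once the equivariant model above is fixed these choices are forced, and one obtains $\tau_1$ rather than a conjugate or its inverse.
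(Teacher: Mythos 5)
Your proposal is correct and follows essentially the same route as the paper's proof: the same section $\{z=u_1\}$, the same lifted action $(z,u)\cdot\gamma=(\gamma_1^{-1}\cdot z,\gamma^{-1}\cdot u)$ on $\Pu\times\mathcal{U}$, the same passage to the quotient $\Pu$-bundle, and the same identification of the monodromy with $\tau_1$. The only differences are expository—you spell out the equivariance of $\sigma$, the proper discontinuity via compactness of $\Pu$, and the loop-lifting computation, which the paper leaves implicit.
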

\begin{rema} 
 On peut prolonger cette structure transverse (de façon singulière) à tout $Z_{\Gamma}$. Cela ne sera pas utilisé ici, on remet cette discussion à un prochain travail.
\end{rema}
On définit une relation d'équivalence naturelle entre feuilletages de Riccati.
\begin{defi}
Soit $\calR$ et $\calR'$ deux feuilletages de Riccati sur $\Pu\times M\rightarrow M$. On dit que $\calR$ et $\calR'$ sont \textbf{biméromorphiquement équivalents} s'il existe une \textbf{transformation de jauge méromorphe} \fonction{\phi}{\Pu\times M}{\Pu\times M}{(z,x)}{(A(x) \cdot z,x)} avec  $A: M \dasharrow \mathrm{GL_2}(\C)$ méromorphe, de sorte qu'on ait $\phi^{*}\calR=\calR'$.
\end{defi}
On note alors que les réductions des représentations de monodromie de $\calR$ et $\calR'$ sont identiques.

Nous prendrons aussi en considération la situation suivante.

\begin{defi} Soient $M$ et $M'$ deux variétés complexes projectives lisses. 
On dit qu'un feuilletage de Riccati $\calR$ sur le $\Pu$-fibré $\pi:\Pu \times M \rightarrow M$  est \textbf{tiré-en-arrière} (rationnel) d'un feuilletage de Riccati $\calR_0$ sur $\pi':\Pu \times M' \rightarrow M'$ s'il existe une application rationnelle dominante $\phi: M \dasharrow M'$ telle que $\calR=(Id\times \phi)^*\calR_0$. Dans le cas où $M'$ est une courbe algébrique, on dit que $\calR$ est tiré-en-arrière d'un feuilletage de Riccati au dessus d'une courbe.
\end{defi}
Des feuilletages de Riccati apparaissent naturellement par projectivisation de connexions plates : soit $\nabla$ une connexion plate sur $E$ un fibré de rang $2$,  le feuilletage donné par ses sections horizontales induit un feuilletage de Riccati $\calR_{\nabla}$ sur $\mathbb{P}(E)$. On voit aisément que la monodromie de $\calR_{\nabla}$ est alors déduite de celle de $\nabla$ par composition avec la projection $\mathbb{P} : \mathrm{GL}_2(\C)\rightarrow \mathrm{PGL}_2(\C)=\mathrm{PSL}_2(\C)$. La trace de $\nabla$ et $\calR_{\nabla}$ déterminent $\nabla$.

%Grâce à cette relation entre connexions plates et feuilletages de Riccati, on déduit de la théorie des systèmes linéaires la proposition suivante.
%On va utiliser des modèles locaux. On indique comment ils se comportent quand on les tire en arrière par un revêtement.
%\begin{prop}
%\begin{itemize}
%\item Si la monodromie en $x=0$ d'une équation de Riccati à pôle simple sur $(\C,0)$ est conjuguée à $z\mapsto \lambda z$ alors, à transformation de jauge holomorphe près, $z'=\phi(x).z, \phi \in \mathrm{PSL}_2(\C\{x\})$, l'équation est $dz=\alpha \frac{dx}{x}z$, avec  $\lambda=exp(2i \pi \alpha)$. Si $x=y^k$, on obtient $dz=k\alpha\frac{dy}{y}z$.
%\item Si la monodromie en $x=0$ d'une équation de Riccati à pôle simple sur $(\C,0)$ est conjuguée à $z\mapsto z+1$ alors, à transformation de jauge holomorphe près,  l'équation est $dz=(nz+x^{n})\frac{dx}{x}$, avec $n \in \Z$. Si $x=y^k$, alors on obtient $dz=k(nz+y^{kn})\frac{dy}{y}$ et, en posant $\tilde{y}=\sqrt[kn]{k}\cdot y$, on obtient $dz=(knz+\tilde{y}^{kn})\frac{d\tilde{y}}{\tilde{y}}$. 
%\end{itemize}
%\end{prop}
%Dans les deux cas, l’exposant $\alpha$ ou $n$ est multiplié par l’indice de ramification $k$.  Quand l'exposant est entier, on doit avoir plus d’informations %pour connaître la monodromie.

Pour se donner une connexion plate de rang $2$, on peut utiliser une solution algébrique de l'équation de Painlevé VI.

\vspace{-0.205cm}
\subsection{Équation de Painlevé VI et isomonodromie}\label{PVIisom}
 L'équation de Painlevé VI de paramètres $(\theta_0, \theta_1, \theta_t,\theta_{\infty})$ est l'équation différentielle non linéaire du second ordre suivante, on la note (PVI)$_{\theta}$.
 \footnotesize
 $$\frac{d^2q}{d^2t}=\frac{1}{2}\left(\frac{1}{q}+\frac{1}{q-1}+\frac{1}{q-t}\right)\left(\frac{dq}{dt}\right)^2-\left(\frac{1}{t}+\frac{1}{t-1}+\frac{1}{q-t}\right)\frac{dq}{dt}$$ 
 $$ + \frac{q(q-1)(q-t)}{2t^2(t-1)^2}\left((\theta_{\infty}-1)^2
 -\theta_0^2\frac{t}{q^2}+\theta_1^2\frac{t-1}{(q-1)^2}+(1-\theta_t^2)\frac{t(t-1)}{(q-t)^2}\right)$$
 \normalsize
Si $t_0\in \C\setminus \{0,1\}$, les solutions $q(t), t\in (\C,t_0)$ de (PVI) gouvernent les germes de déformations isomonodromiques  à un paramètre $(\nabla_t)_{t \in (\C,t_0)}$ de connexions de rang deux de trace nulle avec $4$ pôles simples mobiles : $x=0,1,t,\infty$ sur \\$\C^2\times\Pu \rightarrow \Pu$. On entend par isomonodromie que les connexions plates $(\nabla_t)_{t\in (\C,t_0)}$ sont les restrictions d'une connexion plate à pôles simples $\nabla$ sur $p: \C^2 \times (\C,t_0)\times \Pu \rightarrow (\C,t_0)\times \Pu $. Sous des hypothèses de généricité sur les valeurs propres $(\pm \theta_i/2)$ des résidus de $\nabla_t$ en $x=i$, $i=0,1,t,\infty$, il y a une bijection entre les telles déformations (modulo transformations holomorphes de fibrés vectoriels) et les solutions de l'équation (PVI)$_{\theta}$, cf \cite[section 3.4]{MR1118604}.
Cela peut être exprimé par les formules suivantes. 

Soit $q(t)$ une solution de (PVI)$_{\theta}$ et $$p=\frac{1}{2}\left( \frac{t(t-1)}{q(q-1)(q-t)}\frac{dq}{dt}+\frac{\theta_0}{q}+\frac{\theta_1}{q-1}+\frac{\theta_t-1}{q-t}\right).$$
Alors, à transformation holomorphe de fibré près, la connexion plate $\nabla$ correspondante est donnée par
\fonction{\nabla_q} {\calO\oplus \calO}{\calM^1\oplus \calM^1}{ \left [ \begin{array}{c} z_1\\z_2 \end{array} \right]}{\left [ \begin{array}{c} dz_1\\dz_2 \end{array} \right ] -\left [ \begin{array}{cc} \beta_q/2&\alpha_q\\-\gamma_q&-\beta_q/2 \end{array} \right ]\cdot\left [ \begin{array}{c} z_1\\z_2 \end{array} \right];}
où les 1-formes méromorphes $\alpha_q, \beta_q,\gamma_q \in \calM^1$ sont données par les formules ci-dessous.
 \footnotesize
 $$\rho=1-\frac{1}{2}(\theta_0+\theta_1+\theta_t+\theta_{\infty});$$
$$\alpha=\left(-{\frac { \left( -\rho-{\it \theta_0}-{\it \theta_1}+qp+1-p \right) 
 \left( -{\it \theta_0}+1-{\it \theta_1}-p-{\it \theta_t}+qp-\rho \right) {t
}^{2}}{ \left( -x+t \right)  \left( -1+\rho+{\it \theta_0}+{\it \theta_1}+
{\it \theta_t}-qp+pt \right)  \left( -1+t \right) }}\right.$$ $$+{\frac { \left( -{
\it \theta_0}+1-{\it \theta_1}-p-{\it \theta_t}+qp-\rho \right)  \left( {q}^
{2}p-qp-\rho\,q+q-q{\it \theta_0}-q{\it \theta_t}+{\it \theta_t}-q{\it 
\theta_1} \right) t}{ \left( -x+t \right)  \left( -1+\rho+{\it \theta_0}+{
\it \theta_1}+{\it \theta_t}-qp+pt \right)  \left( -1+t \right) }}+$$

$$\left.{\frac { \left( -p-{\it \theta_1}+qp \right) t+q{\it \theta_0}-{q}^{2}p+qp
-{\it \theta_t}+\rho\,q+q{\it \theta_1}+1-\rho-q+q{\it \theta_t}-{\it \theta_0
}}{ \left( x-1 \right)  \left( -1+t \right) }}\right)dx$$

$$+\left({\frac { \left( -\rho-{\it \theta_0}-{\it \theta_1}+qp+1-p \right) 
 \left( -{\it \theta_0}+1-{\it \theta_1}-p-{\it \theta_t}+qp-\rho \right) {x
}^{2}}{ \left( x-1 \right)  \left( xp-1+\rho+{\it \theta_0}+{\it \theta_1}
+{\it \theta_t}-qp \right)  \left( -x+t \right) }}\right.$$

$$-{\frac { \left( -{\it \theta_0}+1-{\it \theta_1}-p-{\it \theta_t}+qp-\rho
 \right)  \left( {q}^{2}p-qp-\rho\,q+q-q{\it \theta_0}-q{\it \theta_t}+{
\it \theta_t}-q{\it \theta_1} \right) x}{ \left( x-1 \right)  \left( xp-1+
\rho+{\it \theta_0}+{\it \theta_1}+{\it \theta_t}-qp \right)  \left( -x+t
 \right) }}$$
 $$\left.+{\frac { \left( -1+{\it \theta_0}+\rho+{\it \theta_1} \right)  \left( -{
\it \theta_0}+1-{\it \theta_1}-p-{\it \theta_t}+qp-\rho \right) px}{ \left( 
xp-1+\rho+{\it \theta_0}+{\it \theta_1}+{\it \theta_t}-qp \right)  \left( -1
+\rho+{\it \theta_0}+{\it \theta_1}+{\it \theta_t}-qp+pt \right) }}\right.$$
$$ \left. -{\frac {
 \left( q-1 \right)  \left( -{\it \theta_0}+1-{\it \theta_1}-p-{\it \theta_t
}+qp-\rho \right) x}{ \left( x-1 \right)  \left( -1+t \right) }}\right)dt;$$

\newpage
$$\beta=\left( {\frac {{\it \theta_0}}{x}}+{\frac { \left( 2\,p+2\,\rho-2\,qp-2
+2\,{\it \theta_0}+2\,{\it \theta_1}+{\it \theta_t} \right) t+2\,q+{\it 
\theta_t}-2\,qp-2\,\rho\,q+2\,{q}^{2}p-2\,q{\it \theta_1}-2\,q{\it \theta_t}
-2\,q{\it \theta_0}}{ \left( -1+t \right)  \left( -x+t \right) }}\right.$$$$\left.+{
\frac { \left( 2\,p-2\,qp+{\it \theta_1} \right) t-2\,q{\it \theta_0}+2\,
\rho+{\it \theta_1}-2+2\,{\it \theta_0}+2\,{q}^{2}p-2\,q{\it \theta_1}-2\,qp
-2\,\rho\,q+2\,q+2\,{\it \theta_t}-2\,q{\it \theta_t}}{ \left( x-1
 \right)  \left( -1+t \right) }} \right) dx  $$
 
  $$+ \left( -
{\frac {q \left( 1-\rho-{\it \theta_0}-{\it \theta_1}-{\it \theta_t}+qp
 \right) }{t}}  +{\frac {p \left( {\it 
\theta_1}+\rho+{\it \theta_0}-1 \right) }{-1+\rho+{\it \theta_0}+{\it \theta_1
}+{\it \theta_t}-qp+pt}}\right.$$ 
 
 $$+{\frac { \left( -2\,p-2\,\rho+2\,qp+2-2\,{\it \theta_0}-2
\,{\it \theta_1}-{\it \theta_t} \right) x+2\,qp+2\,\rho\,q-2\,{q}^{2}p+2\,
q{\it \theta_1}+2\,q{\it \theta_t}+2\,q{\it \theta_0}-2\,q-{\it \theta_t}}{
 \left( x-1 \right)  \left( -x+t \right) }}$$

$$\left.+{\frac { \left( q-1 \right)  \left( -{\it 
\theta_0}+1-{\it \theta_1}-p-{\it \theta_t}+qp-\rho \right) x+ \left( q-1
 \right)  \left( -{\it \theta_0}+1-{\it \theta_1}-p-{\it \theta_t}+qp-\rho
 \right) }{ \left( x-1 \right)  \left( -1+t \right) }} \right) dt;$$

$$\gamma={\frac { \left( -1+\rho+{\it \theta_0}+{\it \theta_1}+{\it \theta_t}-qp+pt
 \right)  \left( x-q \right) dx }{ \left( x-t \right) 
x \left( x-1 \right) }}-{\frac { \left( -q+t \right)  \left( -1+\rho+{
\it \theta_0}+{\it \theta_1}+{\it \theta_t}-qp+pt \right) dt}{ \left( x-t \right) t \left( -1+t \right) }}.$$
\normalsize

\begin{rema}
Pour éviter les problèmes de coquilles pour les éventuels utilisateurs de ces formules, on donne ces dernières dans une feuille de calcul Maple sur la page web de l'auteur.
\end{rema}

Le feuilletage de Riccati $\calR$ sur $\mathbb{P}(\C^2)\times \left ((\C,t_0)\times \Pu\right)$ déduit par projectivisation de $\nabla_q$ est donné par la 1-forme $-dz+\alpha_q+\beta_q z+\gamma_q z^2$, où $z\in \C\cup \{\infty\}$ satisfait $z=z_1/z_2$. 
La coordonnée $z$ est choisie de sorte que, 
\begin{itemize}
\item pour tout $t_1$ voisin de $t_0$, la restriction $\calR_{t_1}$ de $\calR$ à $t=t_1$ possède une singularité en $s_i=\{(x,z)=(i,i)\}$, pour $i=0,1, \infty$.
\item l'indice de Camacho-Sad pour toute section locale invariante passant par  $s_i$ est $\theta_i$.
\end{itemize}
 Après cette normalisation seuls deux paramètres déterminent $\calR_{t_1}$ : $p(t_1)$ et $q(t_1)$ ; $q(t_1)$ est déterminé par le fait que $(x,z)=(q(t_1),\infty)$ et $s_{\infty}$ sont les seuls points de tangence entre $\calR_{t_1}$ et $z=\infty$.

Soit $q(t)$ une fonction algébrique qui satisfait l'équation (PVI)$_{\theta}$.
Si $(q(s),t(s)): \calC \rightarrow \Pu \times \Pu$ est une paramétrisation de cette solution, par les formules ci-dessus, on en déduit encore un feuilletage de Riccati sur le $\Pu$-fibré trivial au dessus de $\calC\times \Pu$; son lieu polaire est donné par la réunion de $x=0,x=1,x=t(s),x=\infty$ et de fibres de $\calC\times \Pu \rightarrow \calC$ qui contiennent des intersections entre $x=t(s)$ et $x=0,x=1,x=\infty$.
%Le récent travail de \cite{LiTy} décrit les solutions algébriques de (PVI).

\subsection{Stratégie}\label{strategie}
Dans la section \ref{construction_et_etude}, nous allons obtenir un feuilletage transversalement projectif à l'aide d'une solution algébrique de (PVI) et montrer qu'il s'agit d'un feuilletage modulaire. 

Pour ce faire, nous emploierons la classification birationnelle des feuilletages sur les surfaces par Brunella, McQuillan et Mendes. Nous en rappelons ici les aspects que nous utiliserons; tout cela est décrit dans \cite[chapters 8 and 9]{MR2114696} à l'exception du théorème \ref{thmnonabondance}, cf \cite{MR1965362} et \cite{MR2435846}. 

Cette classification fait intervenir deux invariants birationnels qu'on peut associer à un feuilletage holomorphe $(X,\F)$ sur une surface projective lisse : sa dimension de Kodaira $\kod(\F)$ et sa dimension de Kodaira numérique $\nu(\F)$.

Soit $(\tilde{X},\Ft)$ une désingularisation de $\F$, et $v$ un champs de vecteurs méromorphe sur $X$ qui engendre $\Ft$. Le diviseur canonique de $\Ft$ est défini comme $K_{\Ft}=\calO_{\tilde{X}}(v_{\infty}-v_0)$ où $v_{\infty}$ est le diviseur des pôles de $v$ et $v_0$ celui de ses zéros.
si $K_{\Ft}$ est pseudo-effectif, alors on peut calculer sa décomposition de Kodaira numérique : $K_{\Ft}\num P+N$ ; où $P$ est un $\Q$-diviseur nef et $N$ est un $\Q^+$-diviseur contractile dont les composantes irréductibles sont orthogonales à $P$, comme décrit dans \cite[pp~101-102]{MR2114696}.
La dimension de Kodaira numérique de $\F$ est alors définie par :
\begin{equation*} \nu(\F):=\begin{cases}
 0 &\mbox{si } P\num 0,\\
1& \mbox{si }  P\stackrel{num}{\neq}0 \mbox{ et } P\cdot P=0,\\
2 & \mbox{si } P\cdot P>0.
\end{cases}
\end{equation*}
Si $K_{\Ft}$ n'est pas pseudo-effectif, on pose $\nu(\F):=-\infty$.

La dimension de Kodaira de $\F$ est $$\kod(\F):=\mathrm{limsup}_n \frac{\mathrm{log}(h^0(\tilde{X}, K_{\Ft} ^n))}{\mathrm{log}(n)}\in\{2,1,0,-\infty\}.$$

On a pour tout feuilletage $\F$ : \begin{equation} \label{ineg} \nu(\F)\geq \kod(\F). \end{equation}
 On s'intéresse au cas $\nu=1$.
Dans ce cadre, répétons les résultats qui décrivent $\F$ en fonction de $\kod(\F) \in \{1,0,-\infty\}$. 

\begin{theo}[McQuillan-Mendes] \label{McMen}
Si $\F$ est un feuilletage réduit sur une surface projective lisse et $\kod(\F)=1$ alors $\F$ est 
\begin{enumerate}
\item un feuilletage turbulent,
\item une fibration elliptique non-isotriviale,
\item \label{trois}une fibration isotriviale de genre $g \geq 2$ ou
\item un feuilletage  de Riccati.
\end{enumerate}
\end{theo}

\begin{theo}[McQuillan]\label{Mc}
Si $\F$ un feuilletage réduit sur une surface projective lisse tel que $\kod(\calH)=0$ alors $\nu(\calH)=0$.
\end{theo}

\begin{theo}[Brunella-McQuillan]\label{thmnonabondance}
Soit $\F$ un feuilletage holomorphe sur une surface projective lisse avec $\nu(\F)=1$ et $\kod(\F)=-\infty$ alors, à transformation birationnelle près, $\F$ est un feuilletage modulaire.
\end{theo}

Nous déduisons de cela l'énoncé suivant.
\begin{prop}\label{cor}
Soit  $\F$ un feuilletage réduit transversalement projectif sur une surface  projective $X$, tel que $\nu(\F)=1$, dont une structure transverse $(\pi: \Pu \times X \rightarrow X,\calR,\sigma)$ a une monodromie non virtuellement abélienne. On suppose aussi que $\calR$ n'est pas birationnellement équivalent au tiré-en-arrière d'un feuilletage de Riccati au dessus d'une courbe. Alors, à transformation birationnelle près, $\F$ est un feuilletage modulaire.

\begin{proof}
D'après les énoncés précédents, il nous suffit d'exclure les éventualités $1-4$ données  dans le théorème $\ref{McMen}$. Pour ce faire, on utilise le lemme \ref{SLP} qui montre que $\F$ a une unique structure transversalement projective au sens de \cite{MR2337401} et procède au cas par cas:
\begin{enumerate}

\item Si $\F$ est un feuilletage turbulent, alors il admet une structure transversalement projective à monodromie virtuellement abélienne (donnée par des automorphismes de la courbe elliptique sous-jacente), ce qui est exclu.
\item et \ref{trois}. Il n'est pas donné par une fibration, sans quoi il aurait une intégrale première méromorphe $f$ qui lui fournirait une structure de feuilletage transversalement euclidien : $dz=df$, de monodromie triviale; ce qui n'est pas conforme à nos hypothèses.
\setcounter{enumi}{3}
\item Le feuilletage $\F$ ne peut être un feuilletage de Riccati, puisque les feuilletages de Riccati sur $X$ ont une structure transverse donnée par une feuilletage de Riccati birationnellement équivalent au tiré-en-arrière d'un feuilletage de Riccati au dessus d'une courbe, cf \cite[section $3.1.$]{MR2337401}.
\end{enumerate}
\end{proof}
\end{prop}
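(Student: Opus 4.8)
The plan is to determine $\kod(\F)$ precisely and then feed the answer into Theorem \ref{thmnonabondance}. Working on a smooth reduced birational model (on which $\kod$ and $\nu$ are computed), inequality (\ref{ineg}) gives $\kod(\F)\leq\nu(\F)=1$, so $\kod(\F)\in\{-\infty,0,1\}$. Thus it suffices to rule out $\kod(\F)=0$ and $\kod(\F)=1$; once both are excluded, the pair $\kod(\F)=-\infty$ and $\nu(\F)=1$ places us exactly in the hypotheses of the Brunella--McQuillan Theorem \ref{thmnonabondance}, which yields that $\F$ is birationally a modular foliation.

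Excluding $\kod(\F)=0$ is immediate: Theorem \ref{Mc} asserts that a reduced foliation with $\kod=0$ has $\nu=0$, contradicting $\nu(\F)=1$.

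The substance lies in excluding $\kod(\F)=1$. Here Theorem \ref{McMen} leaves only four possibilities, and I would dispose of each by exhibiting the transverse projective structure it naturally carries and confronting it with the hypotheses. A turbulent foliation admits a transverse structure whose monodromy lands in the automorphism group of the underlying elliptic fibre, hence is virtually abelian. A foliation defined by a fibration (the non-isotrivial elliptic and the isotrivial genus $\geq 2$ cases) possesses a meromorphic first integral $f$, giving a transversally Euclidean structure $dz=df$ of trivial — a fortiori virtually abelian — monodromy. A Riccati foliation carries a transverse structure which, by the classification of Riccati foliations on surfaces in \cite{MR2337401}, is birationally equivalent to the pullback of a Riccati foliation over a curve. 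In the first three cases the structure so produced has virtually abelian monodromy, contradicting our monodromy hypothesis; in the fourth it contradicts the assumption that $\calR$ is not birationally a pullback over a curve.

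The delicate point — and the step where I expect the real obstacle — is that each of these four arguments compares the hypothesized structure $(\pi,\calR,\sigma)$ with the a priori \emph{different} structure attached to the type in question. To make the contradictions legitimate I must know that $\F$ admits a \emph{unique} transverse projective structure in the sense of \cite{MR2337401}, so that the structure forced by each type necessarily coincides with the given one. I would therefore establish this rigidity separately (the paper's Lemme \ref{SLP}), exploiting the fact that non-uniqueness of a transverse projective structure forces the monodromy to be reducible or solvable, hence virtually abelian, which our non-virtually-abelian hypothesis forbids. With uniqueness secured the four exclusions become conclusive, so $\kod(\F)=-\infty$, and Theorem \ref{thmnonabondance} completes the proof.
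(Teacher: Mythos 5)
Your proposal is correct and follows essentially the same route as the paper: exclude $\kod(\F)=0$ by Theorem \ref{Mc}, exclude the four cases of Theorem \ref{McMen} when $\kod(\F)=1$ by confronting the transverse structure each type carries with the hypotheses, with the uniqueness of the transverse projective structure (Lemme \ref{SLP}) making these confrontations legitimate, and conclude with Theorem \ref{thmnonabondance}. One small caution: Lemme \ref{SLP} actually concludes with a disjunction — virtually abelian monodromy \emph{or} $\calR$ birationally a pullback of a Riccati foliation over a curve — so your rigidity step needs both hypotheses of the proposition rather than the monodromy hypothesis alone; since both are available, this is only a matter of phrasing.
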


\begin{lemm}[Loray-Pereira] \label{SLP}
Soit $X$ une  variété complexe projective lisse.
Si $\F$ est un feuilletage de codimension $1$ sur $X$ qui possède deux structures transversalement projectives $\Sigma=(\Pu \times X \rightarrow X,\calR,\sigma)$ et $\Sigma_0$ non-équivalentes au sens de \cite{MR2337401}, alors le groupe de monodromie de $\Sigma$ contient un groupe abélien d'indice $\leq 2$  ou $\calR$ est birationnellement équivalent  à un feuilletage de Riccati tiré-en-arrière d'un feuilletage de Riccati au dessus d'une courbe. 
\begin{proof}
La preuve utilise \cite[Proposition 2.1]{MR1432053}, voir \cite[Lemma $5.4.$]{MR2337401}. Si on demande $X$ projective dans les hypothèses, c'est notamment afin de pouvoir trivialiser birationnellement le $\Pu$-fibré associé à $\Sigma_0$.  Une bonne partie de cette énoncé persiste si $X$ est une variété complexe lisse et le fibré sous-jacent à $\Sigma_0$ est trivial ; nous y reviendrons au lemme \ref{lemclean}. 
\end{proof}
\end{lemm}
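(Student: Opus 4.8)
The plan is to compare the two structures through their developing maps and to read off the dichotomy from a single rational map relating them. First I would use that $X$ is projective to birationally trivialize the two $\Pu$-bundles, so that both $\Sigma$ and $\Sigma_0$ are realized on $\Pu\times X\to X$, with Riccati foliations $\calR,\calR_0$ and sections cutting out $\F$; this is the only place projectivity enters, namely to birationally trivialize the bundle underlying $\Sigma_0$, exactly as recorded in the proof of the statement. Let $D$ be the union of the polar loci, $M=X\setminus D$, and let $\rho=\rho_1$ and $\rho_2$ be the monodromies of $\Sigma$ and $\Sigma_0$. On the universal cover of $M$ the two structures provide developing maps $\delta_1,\delta_2$ to $\Pu$, equivariant for $\rho_1$ and $\rho_2$ respectively.

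Since both structures define the same foliation $\F$, the maps $\delta_1$ and $\delta_2$ have the same level sets, so $\delta_2=h\circ\delta_1$ for a single rational map $h\colon\Pu\to\Pu$. Transporting the equivariance of the $\delta_i$ through $h$ yields $h\circ\rho_1(\gamma)=\rho_2(\gamma)\circ h$ for every $\gamma\in\pi_1(M)$, first on the Zariski dense image of $\delta_1$ and hence identically. If $h$ were a Mobius transformation it would realize an equivalence between $\Sigma$ and $\Sigma_0$; since they are inequivalent, $h\notin\mathrm{PGL}_2(\C)$ and therefore $\deg h\ge 2$.

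Next I would exploit the ramification of $h$. Comparing the critical points on both sides of $h\circ\rho_1(\gamma)=\rho_2(\gamma)\circ h$, and using that $\rho_1(\gamma)$ and $\rho_2(\gamma)$ are unramified, one gets $\rho_1(\gamma)(S)=S$, where $S\subset\Pu$ is the finite set of critical points of $h$. Thus the monodromy group $G=\rho_1(\pi_1(M))$ preserves $S$, and Riemann--Hurwitz forbids $|S|=1$ when $\deg h\ge 2$, so $|S|\ge 2$. If $|S|=2$, then $G$ preserves a pair of points of $\Pu$, hence lies in the normalizer of a torus; the subgroup fixing each of the two points individually is abelian of index at most $2$ in $G$, which is the first alternative. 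If $|S|\ge 3$, then a Mobius transformation fixing the three or more points of $S$ is the identity, so $G$ injects into the finite symmetric group of $S$ and $G$ is finite.

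The remaining work, and the main obstacle, is the finite-monodromy case: I must upgrade finiteness of $G$ to the conclusion that $\calR$ is birationally a pullback of a Riccati foliation over a curve. The idea is that a Riccati foliation with finite monodromy becomes trivial on a finite cover, so $\F$ acquires a meromorphic first integral with values in a curve $B$; the foliation by its fibres is then the pullback of a Riccati foliation on $\Pu\times B\to B$ by the associated rational map $X\dasharrow B$. Making this clean requires controlling the birational models and the behaviour along $D$, and this structural step is exactly where I would invoke \cite[Proposition 2.1]{MR1432053}, which organizes the elementary-versus-pullback alternative and underlies \cite[Lemma 5.4]{MR2337401}. An equivalent route, avoiding developing maps, is to encode each structure by a projective triple of meromorphic $1$-forms $(\omega,\eta_i,\xi_i)$ with $\F\colon\omega=0$: the first integrability relation forces $\eta_2-\eta_1=f\,\omega$, the difference assembles into a transversely affine datum for $\F$, and the same dichotomy, invariant pair or factorization through a curve, reappears.
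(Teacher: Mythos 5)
Your critical-set argument (a finite set $S$ preserved by the monodromy, hence an invariant pair or a finite group) is correct and classical, but it rests entirely on the assertion that $\delta_2=h\circ\delta_1$ for a single \emph{rational} map $h$, and that assertion is both unjustified and false in general; it is exactly where the difficulty of the lemma sits. First, single-valuedness fails a priori: a fibre of $\delta_1$ is a union of leaves and $\delta_2$ need not be constant on it, so the local germs $h_p$ (which do exist, both maps being submersions defining $\F$) only continue analytically into a possibly multivalued correspondence; simple connectedness of the cover does not help, since the multivaluedness lives over $\Pu$, not over the cover. Second, and worse, there is no reason for the relation to be algebraic. Concretely, let $\F$ be defined by a closed rational $1$-form $\omega_0$ with nontrivial periods, let $\Sigma_0$ be the translation structure $dz=\omega_0$ (developing map $\delta_0$) and $\Sigma$ the linear structure $dz=z\,\omega_0$ (developing map $e^{\delta_0}$). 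These are two inequivalent transversely projective structures, and the comparison map is $h=\exp$: transcendental, of infinite degree, with empty critical set. (Here both monodromies are abelian, so the lemma holds, but the example destroys your mechanism.) Deciding when the comparison is algebraic is the actual content, and it is precisely what the rational computation behind \cite[Proposition 2.1]{MR1432053} provides: writing both structures as triples $(\omega,\eta,\xi_i)$ of \emph{rational} $1$-forms on the projective $X$, one gets $\xi_2-\xi_1=f\omega$ with $f$ rational, integrability forces $(df+2f\eta)\wedge\omega=0$, and hence $\sqrt{f}\,\omega$ is closed; the dichotomy is then extracted from this closed form. So what you present at the end as "an equivalent route" is not equivalent: it is the proof, and your main route does not reach it.

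Because of this, your case analysis also misses a whole branch and leaves another incomplete. If the monodromy of $\Sigma$ lies in the affine group $\mathrm{Aff}(\C)\subset\mathrm{PSL}_2(\C)$ and is solvable but not virtually abelian (such groups contain no abelian subgroup of index $\leq 2$), no rational semiconjugacy of degree $\geq 2$ is present, yet the lemma requires the pullback alternative there; your trichotomy (equivalent structures, $|S|=2$, $|S|\geq 3$) never sees this case. In the finite non-dihedral case ($A_4$, $S_4$, $A_5$) your sketch conflates $\F$ and $\calR$: what must be proved is that the Riccati foliation $\calR$ on $\Pu\times X$ is birationally the pullback of a Riccati foliation over a curve, not that $\F$ has a first integral. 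The correct route is to pass to the $G$-Galois cover $Y\to X$ trivializing $\calR$, observe that the section yields a $G$-equivariant rational map $s:Y\dasharrow\Pu$, and descend $(z,y)\mapsto(z,s(y))$ to exhibit $\calR$ as the pullback, under the induced map $X\dasharrow\Pu/G$, of the quotient Riccati foliation on $(\Pu\times\Pu)/G\to\Pu/G$; in particular the monodromy factors through the orbicurve $\Pu/G$. You flag this step honestly as remaining work, but without it, and without the affine case, the pullback alternative --- the very alternative the paper must control in the proof of the Théorème \ref{theomod} via the Lemme \ref{lemme non pb} --- is not established. (For comparison, the paper itself does not reprove the lemma: it invokes \cite[Lemma 5.4]{MR2337401} and \cite[Proposition 2.1]{MR1432053}, noting only that projectivity of $X$ serves to birationally trivialize the bundle of $\Sigma_0$, a point your proposal does reproduce correctly.)
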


Ainsi nous démontrerons (théorème $\ref{theomod}$) que le feuilletage $\F$ construit à la section $2$ est un feuilletage modulaire $\F_{\Gamma}$ en appliquant la proposition \ref{cor}. %Cela sera fait à l'issue de la section $2$ au théorème $\ref{theomod}$.
Dans la section \ref{raffinements} nous comprendrons plus en détails le groupe $\Gamma$.

\section{Construction et étude d'un exemple}\label{construction_et_etude}
\subsection{ Une solution de l'équation de Painlevé VI} \label{non pb}
Considérons la solution algébrique de (PVI) suivante. Elle est paramétrée par $s \in \calC=\Pu$.\\
$$q(s)={\frac { \left( {s}^{6}+15\,{s}^{4}-5\,{s}^{2}+45 \right) s \left( s+1
 \right)  \left( s-3 \right) ^{2}}{ \left( 5\,{s}^{6}-5\,{s}^{4}+135\,
{s}^{2}+81 \right)  \left( s+3 \right)  \left( s-1 \right) ^{2}}},
 t(s)=-{\frac { \left( s+1 \right) ^{3} \left( s-3 \right) ^{3}}{ \left( s-1
 \right) ^{3} \left( s+3 \right) ^{3}}},$$
$$\theta_0=-\frac{5}{6}, \theta_1=-1, \theta_t=-1, \theta_{\infty}=\frac{1}{6}.$$
C'est l'image de la solution tétraèdrale n°6 de Boalch \cite{MR2322328} par une transformation d'Okamoto, la transformation
 $s_{\delta} \circ s_{ \infty}$ dans les notations de  \cite{LiTy}. Une solution équivalente sous le groupe d'Okamoto a d'abord été donnée par  \cite{MR1911252}.

Grâce aux formules de la section \ref{PVIisom} on obtient un feuilletage de Riccati $\calR$ sur $\mathbb{P}(\C^2) \times \calC \times \Pu$ à partir de cette solution. On notera $(\calR_s)$ la famille des restrictions de $\calR$ aux niveaux de la projection $\mathbb{P}(\C^2) \times \calC \times \Pu \rightarrow \calC$.
\subsubsection{Monodromie de $\calR_s$} \label{monodlin}
On veut décrire la représentation de monodromie $r_1$ de $\calR_{s_0}$, pour $s_0\in \calC$ général. 
 Soient $x \in \C \cup \{\infty\}$ une coordonnée sur $\Pu$. Soit $\Pu_{t(s_0)}=\Pu\setminus \{x=0,1,{t(s_0)},\infty\}$ et $*\in \Pu_{t(s_0)}$. 
Choisissons $v,w,t,u \in \pi_1(\Pu_{t(s_0)},*)$ des lacets simples faisant le tour dans le sens direct de $x=0,1,{t(s)},\infty$ respectivement et tels que $tuvw=1$.

Soit $r: \pi_1(\Pu_{t(s_0)},*)\rightarrow \mathrm{PSL}_2(\C)$.
Soient $T,U,V$ et $W$ les images respectives de $t,u,v$ et $w$ par un relèvement de $r$ à $\mathrm{SL}_2$. 
Si $r$ est irréductible (i.e. ne possède pas de point fixe sur $\Pu$), alors elle est déterminée, modulo conjugaison globale, par les traces de  $T,U,V,W, VW,WT$ et $VT$.
Soit $(\mathcal{T}_s)_{s \in \calC}$ la famille de feuilletages de Riccati associée à la solution tétahédrale n°6 de Boalch et $r_2 :\pi_1(\Pu_{t(s_0)},*)\rightarrow \mathrm{PSL}_2(\C)$ la représentation de monodromie de $\mathcal{T}_{s_0}$.

  Dans \cite[Tab. $2$ p 93]{MR2322328}, Boalch donne les traces de matrices qui correspondent à $r_2$ modulo l'action de  $\mathcal{MCG}(\mathbb{S}_4^2)$, le mapping class group de la sphère épointée quatre fois $\mathbb{S}_4^2$. L'action de $\mathcal{MCG}(\mathbb{S}_4^2)$ correspond à l'ambiguïté sur le choix des lacets  simples $t,u,v,w$ ci-dessus, voir \cite[Theorem $1.9$ p $30$]{MR0425944}.
Le théorème \cite[Theorem $2.3$ p $6$]{MR2036953} (voir une autre preuve dans \cite[p $202$]{MR2107041})
dit que les traces des images de $vw,wt$ et $vt$ par $r_1$ sont les mêmes que les traces de leurs images par $r_2$. Si $r=r_1$, la trace correspondant au lacet faisant le tour de $x=i$ est donnée par $2cos(\pi \theta_i)$. On en déduit le tableau \ref{traces}.
\begin{table}[htb] \caption{Traces pour $r_1$, modulo l'action de $\mathcal{MCG}(\mathbb{S}_4^2)$ \label{traces}}
\begin{center}
 \begin{tabular}{|c||c|c|c|c|c|c|c|}
\hline
 $M$ & $V$&$W$&$T$&$U$&$VW$&$WT$&$VT$\\
 \hline
 $trace(M)$&$-\sqrt{3}$&$-2$&$-2$&$\sqrt{3}$&$1$&$0$&$1$\\
 \hline
\end{tabular}
\end{center}
 \end{table}
 
D'après \cite[Theorem 2.9]{MR1731936}, le groupe $<V,W>$ est Zariski dense dans $\mathrm{PSL}_2(\C)$ puisque, si $tr(A)$ désigne la trace d'une matrice $A$, on a $$tr(V)^2+tr(W)^2+tr(VW)^2-tr(V)tr(W)tr(VW)=8-2 \sqrt{3}.$$
Un quadruplet qui donne ces traces est donné ci-dessous.\\
$V_0=\left[ \begin {array}{cc} -\sqrt {3}&-2-\sqrt {3}
\\ \noalign{\medskip}2-\sqrt {3}&0\end {array} \right] 
$,
$ W_0= \left[ \begin {array}{cc} -1&-1-\sqrt {3}\\ \noalign{\medskip}0&-1
\end {array} \right] 
$,

$T_0=\left[ \begin {array}{cc} -3&-2-2\,\sqrt {3}\\ \noalign{\medskip}
\sqrt {3}-1&1\end {array} \right] 
$,
$U_0=\left[ \begin {array}{cc} \sqrt {3}-1&1\\ \noalign{\medskip}-2+\sqrt 
{3}&1\end {array} \right] 
$.\\
L'action de $\mathcal{MCG}(\mathbb{S}_4^2)$ ne change pas l'image de la représentation.
Le groupe de monodromie de $\calR_{s_0}$ est donc $<U_0,V_0,W_0>$.
Nous réemploierons ces informations à la section \ref{monod}.
\subsubsection{Un feuilletage non tiré-en-arrière}
En vue d'appliquer la proposition \ref{cor}, on va montrer ce qui suit.
\begin{lemm}\label{lemme non pb}
 Le feuilletage de Riccati $\calR$ n'est pas birationnellement équivalent à un feuilletage de Riccati tiré-en-arrière d'un feuilletage de Riccati au dessus d'une courbe. 
\end{lemm}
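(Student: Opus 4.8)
The plan is to argue by contradiction, working throughout with the \emph{reduced} monodromy representation and its support (Definition~\ref{reduction}), both of which are preserved by meromorphic gauge transformations. So suppose $\calR$ is bimeromorphically equivalent to a pullback $(\mathrm{Id}\times\phi)^*\calR_0$, where $\phi:\calC\times\Pu\dashrightarrow B$ is a dominant rational map onto a curve $B$ and $\calR_0$ is a Riccati foliation on $\Pu\times B\to B$. Then the reduced monodromy of $\calR$ factors through $\phi_*$, and the support of $\calR$ is contained in $\phi^{-1}(D_0)$, where $D_0\subset B$ is the finite essential polar locus of $\calR_0$. In particular every irreducible component of the support of $\calR$ is contracted by $\phi$ to a point of $B$.

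First I would restrict to a generic fibre $F_{s_0}=\{s_0\}\times\Pu$ of the first projection. By Section~\ref{monodlin} the monodromy of $\calR_{s_0}$ is $r_1$, whose image contains the Zariski-dense group $\langle V_0,W_0\rangle$; it is in particular non-trivial, so $\phi$ cannot contract $F_{s_0}$. Hence $\phi|_{F_{s_0}}$ is dominant and $B\cong\Pu$. Next I would note that all four poles of $\calR_{s_0}$ are essential: the local monodromies $V_0,U_0$ (at $x=0,\infty$) are elliptic of trace $\mp\sqrt3\neq\pm2$, while $W_0,T_0$ (at $x=1,t(s_0)$) have trace $-2$ but are non-trivial parabolics, being non-scalar, as one reads off the explicit matrices of Section~\ref{monodlin}. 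Thus the support of $\calR$ meets the generic fibre in exactly the four points $x=0,1,t(s),\infty$, so it contains the three horizontal sections $C_0,C_1,C_\infty$ ($x=0,1,\infty$) and the graph $C_t=\{x=t(s)\}$. Each of these four curves is therefore contracted by $\phi$; in particular the irreducible curves $C_0,C_1,C_\infty$ go to three fixed points $b_0,b_1,b_\infty$ of $B$, and $C_t$ to a fixed point $b_t$.

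The heart of the argument is then a degree count for $\phi_{s_0}:=\phi|_{F_{s_0}}:\Pu\to B\cong\Pu$. Set $q=\deg\phi_{s_0}$. The four essential poles of $\calR_{s_0}$ are exactly $\phi_{s_0}^{-1}(D_0)$, so $\sum_{b\in D_0}|\phi_{s_0}^{-1}(b)|=4$. Zariski-density of $r_1$ forces the monodromy of $\calR_0$ to be Zariski-dense as well, hence non-abelian, so $d_0:=|D_0|\geq 3$. Riemann--Hurwitz gives $\sum_{b\in D_0}\bigl(q-|\phi_{s_0}^{-1}(b)|\bigr)\leq 2q-2$, i.e. $(d_0-2)q\leq 2$, whence $q\leq 2$, with $q=2$ possible only if $d_0=3$.

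To conclude I would use the rigidity of such low-degree maps together with the non-constancy of $t(s)=-\tfrac{(s+1)^3(s-3)^3}{(s-1)^3(s+3)^3}$. If $q=1$ then $\phi_{s_0}$ is the homography determined by the three fixed images $0,1,\infty\mapsto b_0,b_1,b_\infty$, hence is independent of $s_0$; then $\phi_{s_0}(t(s_0))=b_t$ being constant while $\phi_{s_0}$ is injective forces $t(s)$ constant, a contradiction. If $q=2$ (so $d_0=3$), a short analysis of the ramification profiles yields the same conclusion: fixing the images of $0,1,\infty$ and the critical data rigidifies $\phi_{s_0}$. For instance, if $0,\infty$ are the two critical points then $\phi_{s_0}(x)=\psi(x^2)$ for a homography $\psi$ pinned down by $\psi(0)=b_0,\psi(\infty)=b_\infty,\psi(1)=b_1$, and $\phi_{s_0}(t(s))=\psi(1)$ gives $t(s)^2=1$; if instead $1$ and $t(s)$ are the two critical points, the deck involution $x\mapsto c/x$ has $1$ and $t(s)$ as fixed points, forcing $1+t(s)=0$. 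In every case $t(s)$ is forced to be constant, the desired contradiction. I expect the main obstacle to be the geometric input of the second paragraph, namely checking that all four singular points are essential and hence that $C_0,C_1,C_\infty,C_t$ are contracted; once this pins the essential pole count of $\calR_{s_0}$ to exactly four, the Riemann--Hurwitz bound and the rigidity casework are routine.
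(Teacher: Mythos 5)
Your setup is fine and parallels the paper's: the reduction to a generic fibre, the conclusion $B\cong\Pu$, the observation that all four poles of $\calR_{s_0}$ are essential (two order-$6$ elliptics, two nontrivial parabolics), the bound $d_0\geq 3$, and the treatment of $q=1$ via constancy of the cross-ratio. But the pivotal quantitative step is wrong: the claim that the four essential poles of $\calR_{s_0}$ are \emph{exactly} $\phi_{s_0}^{-1}(D_0)$. The local monodromy of the pullback at $x\in\phi_{s_0}^{-1}(b)$ is $\rho^0(\alpha_b)^{e_x}$, where $e_x$ is the ramification index; if $\rho^0(\alpha_b)$ has finite order $n_b$ and $n_b\mid e_x$, then $x$ is a \emph{non-essential} pole of $\calR_{s_0}$. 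So one only knows $\sum_b\vert\phi_{s_0}^{-1}(b)\vert\geq 4$, and Riemann--Hurwitz gives $(d_0-2)q\leq\sum_b\vert\phi_{s_0}^{-1}(b)\vert-2$, which bounds nothing. Concretely, take $D_0$ of orders $(\infty,12,2)$ and the degree-$4$ cover with cycle type $(2,2)$ over each of the three points (Klein four-group monodromy): the pullback has exactly two parabolic and two order-$6$ essential poles, plus two non-essential poles over the order-$2$ point, and your count cannot exclude it. This is precisely why the paper does not use naive Riemann--Hurwitz but Poincare's theorem and an orbifold Gauss--Bonnet computation: non-essential preimages become cone points of angle $2\pi\ell$ and contribute non-positively to the area, and the non-constancy of $t(s)$ forces at least one ramification point over a regular point of the orbifold (otherwise all branch points are fixed, the cover is rigid, and the cross-ratio of the essential poles is constant -- which is also what kills the example above), contributing at most $-2\pi$; only then does one get $d\leq 2$.

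The second gap is in your $q=2$ casework, and the missing case is exactly the one that rigidity cannot kill. The deck involution $\iota$ preserves the essential poles together with their local orders ($0,\infty$ of order $6$; $1,t(s)$ parabolic), so besides your two cases ($\iota$ fixing $\{0,\infty\}$ or fixing $\{1,t(s)\}$) there is the pairing $\iota\colon x\mapsto t(s)/x$, which swaps $0\leftrightarrow\infty$ and $1\leftrightarrow t(s)$, exists for \emph{every} value of $t(s)$, and has both fixed points away from the poles. The quotient map then genuinely deforms with $s$ (its branch points move), so no conclusion of the form ``$t(s)$ constant'' is available; this configuration corresponds to an actual degree-$2$ Painleve VI solution with the same local data. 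The paper disposes of it by a monodromy argument: the pullback representation would have to be of the form $(A,B,A,B)$ with $A$ parabolic, $B$ of order $6$, $AB$ of order $2$, and the orbit of this quadruple under $\mathcal{MCG}_{pure}(\mathbb{S}_4^2)$ has size $2$, whereas the orbit of $(V_0,W_0,T_0,U_0)$ has size $6$ -- equivalently, the two Painleve solutions have different degrees. Without a step of this kind your proof cannot be completed, so as written the proposal fails at both the degree bound and the final exclusion.
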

\begin{proof}
Procédons par l'absurde.
Soit $\calC$ une courbe compacte lisse.
Soient $\calR^*$ un feuilletage de Riccati sur $\Pu\times (\Pu\times \Pu) \rightarrow \Pu\times (\Pu\times \Pu)$ birationnellement équivalent à $\calR$, $\phi : \Pu \times \Pu \dasharrow \calC$ une application rationnelle et $\calR^0$ un feuilletage de Riccati sur $\Pu \times \calC \rightarrow \calC$ tel que $(Id \times \phi)^*\calR^0=\calR^*$. 

On va étudier la restriction $y=\phi_s(x)$ de $\phi$ à la valeur $s$ du paramètre de la déformation. L'application $\phi_s$ ne peut être constante pour un $s$ générique, sous peine de ne pas avoir de monodromie pour $\calR_s$. C'est donc que nous avons, pour $s$ générique, un revêtement ramifié $\phi_s : \Pu  \rightarrow \calC$ et que $\calC=\Pu$. 

%Soient $(x=p_i(s))_i$ les pôles de  $\calR^*_s$ et $(y=q_j)_j$ les pôles de $\calR^0$. Soit $\rho^0:\pi_1(\Pu\setminus \{(y=q_j)_j\})\rightarrow \mathrm{PSL}_2$ (resp. $\rho_s : \pi_1(\Pu\setminus \{(y=p_i)_i\})\rightarrow \mathrm{PSL}_2$) la représentation de monodromie de $\calR^0$ (resp. $\calR_s$). 
Soit $\rho^0$ (resp. $\rho_s$) la représentation de monodromie de $\calR^0$ (resp. $\calR^*_s$).
On a $\rho_s=\rho^0\circ (\phi_s)_*=(\phi_s)^*\rho^0$. 
Quitte à réduire $\rho^0$ et à remplacer $\rho_s$ par une réduction partielle, on peut supposer $\rho^0$ réduite (définition $\ref{reduction}$). Soit $\{(q_j)_j\}$ le support de $\rho^0$.
Associons à la représentation $\rho^0$ la liste $(n_j)_j$ des ordres des éléments $\rho^0(\alpha_j)$ correspondants aux lacets simples $\alpha_j$ autour des $q_j$. Le théorème suivant donne la condition sous laquelle cette liste définit une structure orbifolde hyperbolique sur $\Pu$.

\begin{theo}[Poincaré]
Soient $\Sigma$ une surface de Riemann, $(u_i)_{i\in I}$ une famille finie de points de $\Sigma$ distincts deux à deux et $(m_i)_{i\in I}$ avec, pour tout $i$, $m_i\in \N^*\cup\{\infty\}$.
Soit $A((m_i)_i)=2\pi \left(2g(\Sigma)-2 +\sum_{i\in I} \left(1-\frac{1}{m_i}\right)\right)$; où $\frac{1}{\infty}:=0$. Si $A>0$ alors il existe un sous-groupe discret $\Gamma$ de $\mathrm{PSL}_2(\R)$ tel que $\Hhyp/ \Gamma \stackrel{hol.}{\simeq} \Sigma$ et tel que la métrique  sur $\Hhyp$ donne une métrique singulière exactement aux $(u_i)$ avec les angles $(\frac{2\pi}{m_i})$ autour de ces points et l'aire de $\Sigma$ pour cette métrique soit donnée par $A$.
\end{theo}

Comme $\rho_s=(\phi_s)^*\rho^0$, il y'a, parmi les $n_j$,  $n_{j_0}=6k$, $k\in \N^*$ et $n_{j_1}=\infty$. Comme $\rho^0$ est irréductible (vu que $\rho_s$ l'est, cf section \ref{monodlin}) il y'a un troisième $n_j$ non trivial : $n_{j_2}\geq 2$.
Ainsi $$A_1:=A((n_j)_j)\geq 2\pi \left(-2+\left (1-\frac{1}{\infty}\right)+\left(1-\frac{1}{6}\right)+\left(1-\frac{1}{2}\right)\right)\geq \frac{2\pi}{3},$$
et le théorème ci-dessus nous fournit une métrique singulière $\mu$ sur $\Pu$, d'aire totale $A_1$. L'aire $A_2$ de $\Pu$ pour $\phi_s^*\mu$ est $d A_1$, où $d$ est le degré de $\phi_s$. Les points singuliers de $\phi_s^*\mu$ sont de deux types : soit ils sont
dans les fibres des points singuliers de $\mu$ pour $\phi_s$, soit des points où $\phi_s$ n'est pas étale au dessus d'un point où $\mu$ n'a pas de singularité.

 Comme $\rho_s=(\phi_s)^*\rho^0$, il n'y a que quatre points du premier type dont les angles respectifs ne soient pas de la forme $2\pi\ell$, $\ell\in \N^*$. Leurs angles sont de la forme $(2a\pi/6, 2b\pi/6,0,0)$, où $a,b\in \N^*$. Le revêtement $\phi_s$ se déforme, vu que le birapport $t(s)$ des pôles de $\calR_s$ est non constant, et il y'a donc au moins un point du second type, pour $s$ général. L'angle autour d'un point du second type pour $\phi_s^*\mu$ est $2\pi r$ où $r$ est son indice de ramification.

De plus, comme on peut le voir à l'aide d'une triangulation géodésique adaptée,\\ $A_2=2\pi \left(-2+\left (1-\frac{1}{\infty}\right)+\left (1-\frac{1}{\infty}\right)+ \left( 1-\frac{a}{6} \right)+ \left(1-\frac{b}{6}\right)+\sum_i(1-r_i)\right)$ où, pour tout $i$, $r_i\in \N^*$ correspond à un point singulier d'angle $2\pi r_i$.
On a ainsi $d A_1=A_2 \leq 2\pi(-2+2(1-0)+2(1-\frac{1}{6})+(1-2))=\frac{4\pi}{3}$. Comme $\frac{2\pi}{3}\leq A_1$, on en tire  $d\frac{2\pi}{3}\leq\frac{4\pi}{3}$ ou encore $d \leq 2$.

La famille $(\phi_s)$ est donc une famille de revêtements de degré $2$ de $\Pu$ par lui même, c'est donc une déformation de $x\mapsto x^2$, consistant à faire bouger les points de ramification par rapport aux pôles de $\calR^0$. 
Vues les contraintes de monodromie, on a nécessairement $(n_j)_j=(2,6,\infty)$ et un des points de ramification est fixe au dessus du point associé à $n_j=2$, tandis que l'autre est mobile.

 La représentation de monodromie de $\calR^0$ est ainsi donné par $A,B\in \mathrm{PSL_2}$ avec $A$ parabolique, $B$ d'ordre $6$ et $AB$ d'ordre $2$; une étude élémentaire permet alors de voir que $\rho_s$ est donnée par 
le quadruplet $(A,B,A,B)$ modulo l'action de $\mathcal{MCG}(\mathbb{S}_4^2)$. On peut alors voir, par calcul, que l'orbite de ce quadruplet sous $\mathcal{MCG}_{pure}(\mathbb{S}_4^2)$ est de taille $2$, tandis que celle de $(V_0,W_0,T_0,U_0)$ est de taille $6$, ce qui donne une contradiction.
C'est une façon algébrique d'utiliser que la solution de Painlevé VI construite à partir du revêtement double ci-dessus n'a pas le même degré que la solution qui nous intéresse.
Le lemme est démontré.
 \end{proof}{}

\begin{rema}
\'Evidemment ce type de méthode peut s'appliquer à d'autres déformations isomonodromiques, voir \cite{arXiv:1201.1499}.
\end{rema}

\subsection{Quotient}\label{quotient}

On a trouvé un groupe d'automorphismes d'ordre $4$ pour le feuilletage $\calR$ introduit à la section \ref{non pb}.
À l'aide du calcul formel, on a déterminé une équation pour le feuilletage de Riccati quotient $\hat{\calR}$.

Ce dernier est défini par la forme $-dz+\hat{\alpha}+\hat{\beta} z+\hat{\gamma} z^2$ sur $\Pu \times (\Pu \times \Pu)$ avec $\hat{\alpha}$, $\hat{\beta}$ et $\hat{\gamma}$ donnés ci-dessous où on considère $x, y$ et $z$ comme des coordonnées affines sur $\Pu$,

\footnotesize 
 $\hat{\alpha}=-5\,{\frac {x \left( 3\,y+1 \right)  \left( -y+3\,x \right) dx }{ \left( -y+27\,{x}^{2}-6\,x \right)  \left( -2\,{y}^{2}-9\,
y+30\,yx+9\,{x}^{2} \right) }}-{\frac {5}{12}}\,{\frac { \left( 18\,{y
}^{2}x-10\,{y}^{2}+162\,y{x}^{2}-45\,yx+81\,{x}^{3}-18\,{x}^{2}
 \right) x dy}{ \left( -y+27\,{x}^{2}-6\,x \right) y
 \left( -2\,{y}^{2}-9\,y+30\,yx+9\,{x}^{2} \right) }}$,
 
$\hat{\beta}=\,{\frac { \left( -75\,{y}^{3}x+12\,{y}^{3}-45\,{y}^{2}{x}^{2}+54\,
{y}^{2}-40\,{y}^{2}x-1212\,y{x}^{2}+810\,y{x}^{3}+265\,yx+15\,{x}^{2}-
216\,{x}^{3} \right) dx}{ 6\left( -y+27\,{x}^{2}-6\,x
 \right) x \left( -2\,{y}^{2}-9\,y+30\,yx+9\,{x}^{2} \right) }}\\
+{
}\,{\frac { \left( 1350\,{y}^{4}x-966\,{y}^{4}+17010\,{y}
^{3}{x}^{2}-4707\,{y}^{3}x-256\,{y}^{3}-17064\,{y}^{2}{x}^{2}+2466\,{y
}^{2}x\right) dy }{ 72\left( -y+27\,{x}^{2}-6\,x \right) y
 \left( -2\,{y}^{2}-9\,y+30\,yx+9\,{x}^{2} \right)  \left( 3\,y+1
 \right) }}\\
 +\frac{\left(49815\,{y}^{2}{x}^{3}-428\,{y}^{2}-1845\,yx-30780\,y{x}^{3}+
14508\,y{x}^{2}+21870\,y{x}^{4}-5832\,{x}^{4}+1701\,{x}^{3}-90\,{x}^{2
} \right)dy}{ 72\left( -y+27\,{x}^{2}-6\,x \right) y
 \left( -2\,{y}^{2}-9\,y+30\,yx+9\,{x}^{2} \right)  \left( 3\,y+1
 \right) }$
 et \\
 $\hat{\gamma}=\,{\frac { \left( 226800\,{y}^{2}{x}^{2}-8325\,{y}^{3}
x-67665\,{y}^{2}x+33\,yx+2376\,y{x}^{2}-1080\,{x}^{2}+2593\,{y}^{2}+
565\,y-75\,x+1875\,{y}^{4}-5033\,{y}^{3} \right) dx }{720
 \left( -y+27\,{x}^{2}-6\,x \right) x \left( -2\,{y}^{2}-9\,y+30\,yx+9
\,{x}^{2} \right) }}\\
-\,{\frac { \left( -7950\,{y}^{5}
+33750\,{y}^{5}x+104906\,{y}^{4}-374769\,{y}^{4}x-36450\,{y}^{4}{x}^{2
}-6169500\,{y}^{3}{x}^{2}+776799\,{y}^{3}x+1322\,{y}^{3} \right) dy}{1864
 \left( -y+27\,{x}^{2}-6\,x \right) x \left( -2\,{y}^{2}-9\,y+30\,yx+9
\,{x}^{2} \right) y \left( 3\,y+1 \right) }}\\
-\frac{\left(11524275\,{y}
^{3}{x}^{3}-2426517\,{y}^{2}{x}^{3}+7654500\,{y}^{2}{x}^{4}+72945\,{y}
^{2}x-71244\,{y}^{2}{x}^{2}-4030\,{y}^{2}-951831\,y{x}^{3}+224028\,y{x
}^{2}-17325\,yx\right)dy}{1864
 \left( -y+27\,{x}^{2}-6\,x \right) x \left( -2\,{y}^{2}-9\,y+30\,yx+9\,{x}^{2} \right) y \left( 3\,y+1 \right) }\\
 -\frac{\left(1968300\,y{x}^{5}+801900\,y{x}^{4}-145800\,{x}^{4}-450
\,{x}^{2}+14985\,{x}^{3}+393660\,{x}^{5}\right)dy}{1864
 \left( -y+27\,{x}^{2}-6\,x \right) x \left( -2\,{y}^{2}-9\,y+30\,yx+9\,{x}^{2} \right) y \left( 3\,y+1 \right) }$.
 \normalsize
 
 Pour montrer que $\hat{\calR}$ est le quotient annoncé, on va considérer le tiré en arrière $\calR^1$ de $\hat{\calR}$ par le revêtement $rev:=(Id \times Id \times r)$ où $r :\Pu \rightarrow \Pu$ est défini par $r(s)={\frac {4{s}^{2}}{ \left( {s}^{2}-3 \right) ^{2}}}$.

Dans cette situation $\hat{\calR}$ est le quotient de $\calR^1$ par le groupe d'automorphisme $<s\mapsto -s,s\mapsto 3/s>$ du revêtement galoisien $rev$.
Il suffit alors de montrer ce qui suit.
\begin{lemm}\label{lemmquotient}
Pour $s\in \Pu$ général, il existe $\psi_s \in Aut(\Pu)$ tel que $(Id \times\psi_s)^*\calR^1_s$ soit birationnellement équivalent à $\calR_s$, disons via $\phi_s(x)$. 
En particulier, les réductions des représentations de monodromie  de $\calR_s$ et $\hat{\calR}_{r(s)}$ sont les mêmes. 
De plus, $(x,s)\mapsto(\phi_s(x),\psi_s)\in \mathrm{GL}_2(\C)^2$ est une application rationnelle sur $\Pu\times \Pu$.
\begin{proof}
Par calcul formel, l'automorphisme $\psi_s$ est donné par le changement de variable $x={\frac {6\,{s}^{2} \left( s-1 \right) ^{3}X-2\,s \left( s-3 \right) 
^{3}}{27\, \left( s-1 \right) ^{3} \left( {s}^{2}-3 \right) X+3\,s
 \left( s-3 \right) ^{3} \left( {s}^{2}-3 \right) }}$; 
 
 l'équivalence birationnelle  de $(Id \times \psi_s)^*\calR^1_s$ avec $\calR_s$ est alors donnée par
 $z=A(X,s)\cdot Z$ pour
$A(X,s)=\left [\begin{smallmatrix}a&b\\c&d\end{smallmatrix} \right ]$  \small
où
 $a=g h w, b=v g f, c=h u$ et $d=f e$; avec

$f={s}^{4}-8\,{s}^{3}-6\,{s}^{2}-24\,s+9;$

$g=40\,s \left( {s}^{2}-3 \right)  \left( {s}^{2}+3 \right)  \left( 3\,X{
s}^{4}-{s}^{3}-9\,X{s}^{3}+9\,X{s}^{2}+9\,{s}^{2}-3\,Xs-27\,s+27
 \right);$
 
$h={s}^{7}+11\,{s}^{6}-27\,{s}^{5}+75\,{s}^{4}+135\,{s}^{3}+405\,{s}^{2}+
675\,s+405;$

$v=X{s}^{7}+5\,{s}^{6}+33\,{s}^{5}X-30\,{s}^{5}+15\,{s}^{4}+140\,{s}^{3}-
125\,X{s}^{3}-45\,{s}^{2}+315\,Xs-270\,s-135;$

$w=-{s}^{4}+5\,{s}^{3}-3\,X{s}^{3}-3\,X{s}^{2}-3\,{s}^{2}+15\,Xs-9\,s-9\,
X;$
$e=-688905\,s-688905\,{s}^{2}+164025\,X+1980315\,{s}^{7}-164025\,Xs-
6748110\,X{s}^{4}-3268962\,X{s}^{7}+7416630\,{s}^{5}X-2262330\,X{s}^{3
}+2310930\,X{s}^{2}+2489535\,{s}^{4}+2897775\,{s}^{3}-1357965\,{s}^{6}
-3211245\,{s}^{5}-375\,{s}^{15}+25\,{s}^{16}-45\,{s}^{16}X+3861\,{s}^{
13}{X}^{2}+5\,{s}^{17}X-243\,{s}^{15}{X}^{2}-1071\,{s}^{14}{X}^{2}+298
\,{s}^{15}X-1602\,{s}^{14}X+81\,{s}^{16}{X}^{2}-54018\,{s}^{12}X+
172402\,{s}^{11}X-360810\,{s}^{10}X+1046682\,{s}^{6}X+504516\,X{s}^{9}
+546372\,X{s}^{8}+9914\,{s}^{13}X+31725\,{s}^{12}{X}^{2}-105687\,{s}^{
11}{X}^{2}+185229\,{s}^{10}{X}^{2}+242595\,{s}^{6}{X}^{2}-270351\,{s}^
{9}{X}^{2}-54621\,{s}^{8}{X}^{2}+789687\,{s}^{7}{X}^{2}-3042225\,{s}^{
5}{X}^{2}+2139615\,{X}^{2}{s}^{4}+2465235\,{X}^{2}{s}^{3}-3575745\,{X}
^{2}{s}^{2}+1191915\,{X}^{2}s+2225\,{s}^{14}-177525\,{s}^{8}+211365\,{
s}^{10}+12765\,{s}^{12}-6575\,{s}^{13}-46675\,{s}^{11}-432795\,{s}^{9}$;

$u=-45927\,{s}^{2}+91044\,{s}^{7}+69984\,Xs-476928\,X{s}^{4}+387936\,{s}^
{5}X+326592\,X{s}^{3}-186624\,X{s}^{2}+157464\,{s}^{4}+30618\,{s}^{3}-
53055\,{s}^{6}-121986\,{s}^{5}+768\,{s}^{12}X-4032\,{s}^{11}X+17664\,{
s}^{10}X-138240\,{s}^{6}X-43104\,X{s}^{9}+46080\,X{s}^{8}-96\,{s}^{13}
X+189\,{s}^{12}{X}^{2}-378\,{s}^{11}{X}^{2}-5832\,{s}^{10}{X}^{2}+
128088\,{s}^{6}{X}^{2}+13554\,{s}^{9}{X}^{2}+17685\,{s}^{8}{X}^{2}-
91044\,{s}^{7}{X}^{2}-101628\,{s}^{5}{X}^{2}+83511\,{X}^{2}{s}^{4}-
101250\,{X}^{2}{s}^{3}+97200\,{X}^{2}{s}^{2}-51030\,{X}^{2}s-5\,{s}^{
14}+10935\,{X}^{2}-42696\,{s}^{8}-3093\,{s}^{10}-400\,{s}^{12}+70\,{s}
^{13}+1250\,{s}^{11}+11292\,{s}^{9}$.

\normalsize

On donne en ligne une feuille de calcul qui permet de vérifier ces assertions.

\end{proof}
\end{lemm}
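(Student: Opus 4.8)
The plan is to deduce the monodromy statement from the birational equivalence, and to obtain the latter by producing an explicit base automorphism $\psi_s$ together with a fibre gauge $\phi_s$ whose existence is dictated by the covering structure and whose shape is forced by the singular data. First I would record that restricting $\calR^1=rev^{*}\hat\calR$ to a fixed parameter $s$ merely substitutes the constant value $y=r(s)$, so that on $\{s=\mathrm{const}\}$ the $dy$-terms vanish and $\calR^1_s$ and $\hat\calR_{r(s)}$ are literally the same Riccati foliation over the $x$-line. Granting the first assertion, the second one (that the reduced monodromies coincide) then costs nothing: a meromorphic gauge transformation leaves the reduced monodromy unchanged, as noted just after the definition of birational equivalence, while pre-composing with $\psi_s$ only transports the representation along the isomorphism of fundamental groups it induces on the punctured bases. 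Hence $\calR_s$ and $\hat\calR_{r(s)}$ share the same reduced monodromy.

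For the existence of the pair $(\psi_s,\phi_s)$ I would argue structurally. Both $\calR$ and $\calR^1$ are degree-four covers of $\hat\calR$ in the $s\to y$ direction: $\calR$ through the quotient by $G=\langle s\mapsto -s,\ s\mapsto 3/s\rangle$, and $\calR^1$ through the Galois covering $rev$. Consequently $\calR^1$ and $\calR$ are birationally isomorphic as Riccati bundles over the $y$-line, the isomorphism being a bundle map covering a birational automorphism of $\Pu\times\Pu$ fibred over the $y$-line. Restricting this bundle map to a generic fibre $s$ yields exactly a homography $\psi_s$ of the $x$-line and a fibre gauge $\phi_s$, which is the data claimed; since $r$ and the $G$-action are algebraic, the dependence on $s$ is rational. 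This settles the statement in principle, and it remains to exhibit the transformation and to check it.

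To make $\psi_s$ explicit I would match singular data. On one side $\calR_s$ has its four genuine poles at $x=0,1,t(s),\infty$ with Camacho-Sad indices $\theta_0,\theta_1,\theta_t,\theta_\infty$; on the other the poles of $\hat\calR_{r(s)}$ are read off from the denominators of $\hat\alpha,\hat\beta,\hat\gamma$ once $y=r(s)$ is substituted, after discarding the apparent singularities. Since a homography is determined by three point-correspondences, requiring $\psi_s$ to send the four marked points onto $\{0,1,t(s),\infty\}$ compatibly with the indices fixes it up to the remaining finite symmetry and produces the displayed rational expression of $x$ in terms of $X$ and $s$. With $\psi_s$ in hand, $(Id\times\psi_s)^{*}\calR^1_s$ and $\calR_s$ are two Riccati equations on the same punctured $x$-line with the same local exponents and, by the trace computation of Section~\ref{monodlin}, the same irreducible monodromy up to conjugacy; an intertwining gauge $z=\phi_s(x)\cdot Z$ therefore exists and is found by solving the linear system that identifies $-dz+\alpha_q+\beta_q z+\gamma_q z^{2}$ with the form obtained from $-dz+\hat\alpha+\hat\beta z+\hat\gamma z^{2}$.

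The hard part will be purely computational. The entries of $\phi_s(x)$ are enormous polynomials, the coefficient $e$ alone being of degree $17$ in $s$ and carrying $X$ and $X^{2}$, so that both the resolution of the intertwining system and the final verification that the transformed foliation is exactly $\calR_s$ are only realistic through a computer-algebra system, which is why the explicit formulas are accompanied by an online worksheet. Once $\psi_s$ and $\phi_s$ are available, the rationality of $(x,s)\mapsto(\phi_s(x),\psi_s)$ in $\mathrm{GL}_2(\C)^2$ is immediate from the formulas, and checking the identity of the two Riccati forms completes the proof.
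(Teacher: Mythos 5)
Your first paragraph is correct and worth keeping: restricting $\calR^1=rev^*\hat{\calR}$ to $\{s=\mathrm{const}\}$ kills the $ds$-terms and substitutes $y=r(s)$, so $\calR^1_s$ and $\hat{\calR}_{r(s)}$ coincide as Riccati foliations over the $x$-line, and the monodromy assertion then follows from the birational-equivalence assertion, since a meromorphic gauge transformation preserves the reduced monodromy while $\psi_s$ only transports it through an isomorphism of fundamental groups. Your closing observation is also accurate: in the end everything rests on a computer-algebra verification, and that is literally the paper's proof --- it exhibits $\psi_s$ and the matrix $A(X,s)$ and refers to an online worksheet for the identity of the two one-forms.

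The gap lies in the two ``structural'' arguments you insert in between and claim ``settle the statement in principle''; both are circular. In your second paragraph you use that $\calR$ is a degree-four cover of $\hat{\calR}$, namely the quotient by $G=\langle s\mapsto -s,\ s\mapsto 3/s\rangle$. But $\hat{\calR}$ is only an explicit one-form produced by computer algebra as a \emph{candidate} for that quotient, and the paper is explicit that the present lemma is what proves the identification (``Pour montrer que $\hat{\calR}$ est le quotient annonc\'e [...] Il suffit alors de montrer ce qui suit''); even granting that $\calR$ admits an order-$4$ automorphism group, identifying its abstract quotient with the displayed $\hat{\alpha},\hat{\beta},\hat{\gamma}$ is exactly what is at stake, so the covering structure of $\calR$ over $\hat{\calR}$ cannot be a premise. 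The same circularity undermines your third paragraph: you deduce the existence of the intertwining gauge from ``the same irreducible monodromy up to conjugacy'', but Section \ref{monodlin} computes the monodromy of $\calR_s$ only; the monodromy of $\hat{\calR}_{r(s)}=\calR^1_s$ is not known independently --- on the contrary, the paper obtains it \emph{from} this lemma (this is how Theorem \ref{theomod} and Section \ref{monod} use it). So your Riemann--Hilbert-style step (same exponents plus same irreducible regular-singular monodromy imply meromorphic gauge equivalence, which is correct in itself) has no premise to run on. Once these circular steps are excised, what remains of your proposal --- guess $\psi_s$ by matching the non-apparent poles, solve the linear intertwining system with a CAS, and verify the resulting identity of one-forms --- is precisely the paper's proof; the computation must therefore be presented as the proof itself, not as a verification of a fact already established by soft arguments.
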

Dans la suite, on va considérer l'image $\tilde{\calR}$ de $\hat{\calR}$ par 
 $$\begin{array}{rcl} \Pu\times(\Pu\times \Pu)&\dasharrow &   \Pu\times \Pd \\
                         ([z:1], ([x:1],[y:1]))    &\longmapsto     &([z:1],[x:y:1]).
                      \end{array}$$

\subsection{Dimension de Kodaira numérique}

On choisit de s'intéresser au feuilletage $\F$ sur $\Pd$ dont la structure transverse est donnée par notre feuilletage de Riccati $\tilde{\calR}$ et la section $z=0$.
Ce feuilletage est ainsi donné, dans la carte affine $(x,y)$ par la forme 
$$\omega=-12y(1+3y)(3x-y)dx+\left [(10-18x)y^2-9x(18x-5)y-9x^2(9x-2) \right ]dy$$
%  $\omega=Pdx+Qdy$ où
%$$P= 60\,{y}^{2}-180\,yx-540
%\,{y}^{2}x+180\,{y}^{3} \mbox{ et} $$

%$$Q= 50\,{y}^{2}-90\,{y}^{2}x+225\,yx-405\,{x}^{3}-810\,y{x}^{2}+90
%\,{x}^{2}. $$

Nous souhaitons décider si ce feuilletage est, à transformation birationnelle près, un feuilletage modulaire au sens de la définition \ref{modulaire}. Pour ce faire, on emploie la proposition \ref{cor}.

On va calculer la dimension de Kodaira numérique de $\F$, pour cela on désingularisera $\F$ et donnera le diviseur canonique $K_{\Ft}$ du feuilletage désingularisé $\Ft$.
 Comme l'étude de la section \ref{courbesinv} le montrera, $\Ft$ possède un nombre fini de courbes rationnelles invariantes,  ainsi $K_{\Ft}$ est pseudo-effectif, d'après Miyaoka; cf \cite[chapter 7]{MR2114696}. On pourra donc calculer la décomposition de Zariski $K_{\Ft}=P+N$.
 
D'après  un théorème de McQuillan (voir \cite[Theorem 1 p~106]{MR2114696}), pourvu que $\Ft$ soit relativement minimal, le support de $N$ est bien connu : c'est la réunion des supports des $\Ft$-chaînes maximales.
\newpage
Les $\Ft$-chaînes sont les courbes $C$ de composantes irréductibles $(C_i)_{i=1\ldots r}$ telles que, pour tous $i,j \in \{1,\ldots,r\}$, on ait
\begin{itemize}
\item $C_i$ est une courbe rationnelle  lisse invariante de $\Ft$,
 \item $C_i.C_j=1$ si $\vert i-j \vert=1$,
 \item $C_i.C_j=0$ si $\vert i-j \vert>1$,
 \item $C_i.C_i<-1$,
 \item $C_1$ contient une seule singularité de $\Ft$ et, pour $i \in  \{2,\ldots,r\}$, $C_i$ contient exactement deux singularités de $\Ft$.
 \end{itemize}
 
Pour un feuilletage réduit, la condition de relative minimalité est une condition qui porte sur les courbes rationnelles invariantes de $\Ft$ (cf \cite[chapter $5$]{MR2114696} ).
 
 On voit que la compréhension des courbes rationnelles invariantes de $\F$ est un préliminaire à la bonne réalisation de notre objectif. 
 
On sait que certaines composantes du lieu polaire de $\tilde{\calR}$ peuvent donner des courbes invariantes pour le feuilletage, ce qui se vérifie pour les composantes suivantes :
 \begin{equation}  \label{equ} \left \lbrace \begin{array}{lr} 
   \ell_1:&y=0 \mbox{ ;}\\
\ell_2:&y+\frac{1}{3}=0 \mbox{ ;}\\
 \ell_\infty:& \mbox{ la droite à l'infini ;}\\
  R:&-\frac{1}{27}y+x^{2}-\frac{2}{9}x=0 \mbox{ et}\\
 V:&-\frac{2}{9}y^{2}-y+\frac{10}{3}xy+{x}^{2}=0.
 \end{array} \right.
 \end{equation}

Chacune de ces composantes est une courbe rationnelle lisse. 

On calcule à l'aide de Maple l'ensemble des points singuliers $\Sigma=Sing(\F)$ du feuilletage $\F$ sur $\Pd$ et, pour tout $p\in \Sigma$, on calcule la partie linéaire $L(p)$ d'un champs de vecteurs à zéros isolés définissant $\F$ au voisinage de $p$.
Les singularités non-réduites sont les éléments $p$ de $\Sigma$ tels que $L(p) = 0$ ou  bien tels que le rapport $ \lambda =\lambda(p)$ des valeurs propres  de $L(p)$ soit un rationnel strictement positif.
Ce sont ces singularités qu'il faut éclater pour désingulariser le feuilletage $\F$, cf  \cite[chapter $1$]{MR2114696} .

On donne la liste des éléments $p$ de $\Sigma$ et leurs propriétés dans le tableau \ref{sing}.
On donne un dessin de la configuration de nos courbes dans la figure \ref{courbes}. On y place aussi les points singuliers qui, à l'exception du point $T$, correspondent à des intersections de nos composantes de pôle.

\begin{table}[htb]
\caption{ \label{sing} Description des points singuliers}
 \center{\begin{tabular}[t]{|c||c|c|c|}
 \hline
 $p$ &$ \lambda$ & élément de &coordonnées de $p$ \\
 \hline
 $A$  & $1$ &$\ell_1$, $\ell_2$ et $\ell_{\infty}$ & $(u,v)=(0,0)$\\
 \hline
 $B$ & $2 $& $V$ et $\ell_2$ &$ (x,y)=(5/9,-1/3)$\\
  \hline
$ C$ &$ 2$ & $R$ et $\ell_2$ &$(x,y)=(1/9,-1/3)$\\
  \hline
 $D$ & $-2$ & $R$ et $\ell_1$ &$ (x,y)= (2/9,0)$\\
  \hline
 $E,F$  &$\lambda <0$& $V$ et $\ell_{\infty}$   & $(u,v)=(15/2 \pm 9/2\,\sqrt {3},0)$ \\
  \hline
$ G$ &$ 2$ &  $R$ et $\ell_{\infty} $&$ (s,t)=(0,0)$\\
  \hline
$ H$ & partie linéaire nulle & $V$, $R$ et $\ell_1$ & $(x,y)=(0,0)$ \\
\hline
$I$ & 3& $V$ et $R$ & $(x,y)=(1/3,1) $\\
  \hline
 $T $ &$ -4$ & $\ell_2$ &$ (x,y)=(2/9,-1/3)$\\
  \hline
 \end{tabular}} \\
 \center{\textit{On utilise sur $\Pd$ les cartes $[x:y:1] \mapsto (x,y)$,  $[1:u:v] \mapsto (u,v)$ et $[s:1:t]\mapsto (s,t)$.}}
\end{table}

\begin{figure}[htb]
\caption{\label{courbes} configuration de courbes et positions des points}
\center{
\includegraphics{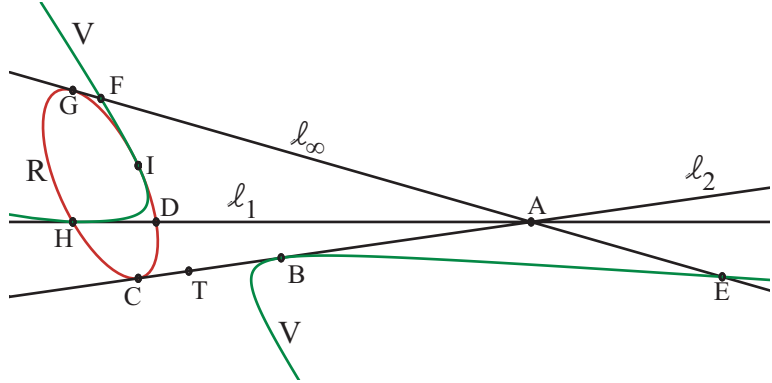}}
\end{figure}

\subsubsection{Désingularisation}\label{section desingularisation}
Par inspection de  la figure \ref{courbes}, on voit que si $p$ est élément de $ \{ A, B, C, G, I\}$ alors le feuilletage $\F$  a une singularité dicritique en $p$. Comme  $\lambda=\lambda(p) \in \N^*$, 
en appliquant le théorème de forme normale de Poincaré-Dulac on obtient que, dans de bonnes coordonnées $(z,w)$ au voisinage de $p$, le feuilletage est donné par le champ $z\partial_z+(\lambda w+\varepsilon z^{\lambda})\partial_w$, avec $\varepsilon=0$ ou $\varepsilon=1$.

 De plus, le fait que la singularité soit dicritique impose $\varepsilon=0$. À l'aide des formes normales, on peut alors voir que la singularité en $p$ se résout par une suite de $\lambda$ éclatements de diviseur exceptionnel une chaîne de courbes rationnelles $Ch(p)$, dont seulement 
la dernière composante est transverse au feuilletage et d'auto-intersection $-1$, tandis que les autres sont d'auto-intersection $-2$.

pour $p=B,C,G$, $\lambda=2$ on appelle $D_p$ la première composante de $Ch(p)$ et $E_p$ la dernière;
pour $p=A$, il n'y a qu'une composante à $Ch(p)$, qu'on nomme $D_A$ et
pour $p=I$, il y a trois composantes, la première est appelée $F_I$, la deuxième est appelée  $D_I$ et la dernière est appelée $E_I$.

Soit $X\rightarrow \Pd$ la désingularisation de $A,B,C,G,I$ décrite ci-dessus.
La désingularisation de $H$ n'est pas aussi prévisible. Toutefois, on voit, en calculant, que si on éclate $X$ en $H$, le feuilletage induit sur l'éclaté a trois singularités sur le diviseur exceptionnel $D_H$ : une réduite en $H_1$ avec $\lambda(H_1)=-7/2$, une réduite en $H_2$ avec $\lambda(H_2)=-7/12$ et une dicritique  en $H_3$ avec $\lambda(H_3)=1$. Il suffit ensuite d'éclater encore une fois en $H_3$ pour désingulariser le feuilletage, le nouveau diviseur exceptionnel $E_H$ est alors transverse au feuilletage, tandis que la transformée stricte de $D_H$, qu'on note encore $D_H$, est invariante par le feuilletage.

On note $\tilde{\mathbb{P}}^2 \rightarrow \Pd$ la désingularisation complète ainsi obtenue et $\Ft$ le feuilletage désingularisé, on donne un dessin décrivant cette désingularisation dans la figure
\ref{dessinglobaldesing}, les singularités y sont représentées par des points. Dans la figure \ref{chaines et cycles}, on donne un dessin plus épuré qui  indique les intersections entre les courbes rationnelles invariantes par $\Ft$ que nous connaissons. 
Le diviseur canonique du feuilletage $\F$ est aisément calculé : $\F$ est de degré $3$, si $\delta$ est une droite générique de $\Pd$, alors $K_\F=(3-1)\delta=2\delta$ d'après  \cite[p~27]{MR2114696}. Dans la suite d'éclatements on peut suivre ce que devient le diviseur canonique du nouveau feuilletage, c'est l'objet du résultat suivant.
\begin{prop}
Soient $ \pi : \tilde{Y} \rightarrow Y$ l'éclatement d'une surface projective lisse $Y$ en un point $p$ et $E$ le diviseur exceptionnel. Soit $\mathcal{H}$ un feuilletage sur $Y$ et $\tilde{\mathcal{H}}$ le feuilletage induit sur $\tilde{Y}$. Soit $a(p)$ l'ordre d'annulation d'une $1$-forme holomorphe $\omega$, à zéros isolés, donnant le feuilletage $\mathcal{H}$ au voisinage de $p$.
\begin{itemize}
\item Si $E$ est invariant par $\mathcal{H}$ alors $K_{\tilde{\mathcal{H}}}=\pi^*(K_{\mathcal{H}})- (a(p)-1)E$.
\item Si $E$ n'est pas invariant par $\mathcal{H}$ alors $K_{\tilde{\mathcal{H}}}=\pi^*(K_{\mathcal{H}})- a(p)E$.

\end{itemize}
\begin{proof}
Voir \cite[Chapter $2$]{MR2114696}.
\end{proof}
\end{prop}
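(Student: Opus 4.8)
La formule \'etant une relation entre fibr\'es en droites sur $\tilde Y$ qui co\"incident en dehors de $E$ (l\`a o\`u $\pi$ est un isomorphisme), on sait a priori que $K_{\tilde{\mathcal{H}}} - \pi^*K_{\mathcal{H}} = c\,E$ pour un entier $c$, et le probl\`eme est de d\'eterminer $c$ par un calcul local au voisinage de $E$. Le plan est de se ramener au fibr\'e normal : en combinant $K_{\mathcal{H}} = K_Y + N_{\mathcal{H}}$ avec la formule classique $K_{\tilde Y} = \pi^*K_Y + E$, on obtient $K_{\tilde{\mathcal{H}}} = \pi^*K_{\mathcal{H}} + E + (N_{\tilde{\mathcal{H}}} - \pi^*N_{\mathcal{H}})$, de sorte qu'il suffit de comparer $N_{\tilde{\mathcal{H}}}$ et $\pi^*N_{\mathcal{H}}$.

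Je proc\'ederais ensuite par un calcul en coordonn\'ees locales. Choisissons des coordonn\'ees $(x,y)$ centr\'ees en $p$ et \'ecrivons $\omega = A\,dx + B\,dy$ avec $A,B$ sans facteur commun (z\'eros isol\'es), de sorte que $a(p) = \min(\mathrm{ord}_p A, \mathrm{ord}_p B)$. Dans la carte $y = xt$ de l'\'eclatement on a $E = \{x=0\}$ et $$\pi^*\omega = \bigl(A(x,xt) + t\,B(x,xt)\bigr)\,dx + x\,B(x,xt)\,dt.$$ En \'ecrivant $A(x,xt) = x^{a}\hat A$ et $B(x,xt) = x^{a}\hat B$ avec $\hat A,\hat B$ holomorphes et sans facteur commun autre qu'une \'eventuelle puissance de $x$ (ce qui r\'esulte de ce que $A$ et $B$ sont premiers entre eux), on obtient $\pi^*\omega = x^{a}\,\eta_0$ avec $\eta_0 = (\hat A + t\hat B)\,dx + x\hat B\,dt$ et $a = a(p)$.

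Le point cl\'e sera de relier l'ordre d'annulation de $\pi^*\omega$ le long de $E$ \`a l'invariance de $E$, via le crit\`ere $dx\wedge\eta \equiv 0 \pmod{x}$ appliqu\'e \`a la forme r\'eduite $\eta$. Si $x \nmid (\hat A + t\hat B)$, alors $\eta_0$ est d\'ej\`a r\'eduite et d\'efinit $\tilde{\mathcal{H}}$ ; son coefficient de $dt$ \'etant $x\hat B$, divisible par $x$, le diviseur $E$ est invariant, et $\pi^*\omega = x^{a}\eta$ s'annule \`a l'ordre $a$ le long de $E$. Si au contraire $x \mid (\hat A + t\hat B)$, on factorise une puissance suppl\'ementaire : $\pi^*\omega = x^{a+1}\eta$ avec $\eta = \tfrac{\hat A + t\hat B}{x}\,dx + \hat B\,dt$ r\'eduite et d\'efinissant $\tilde{\mathcal{H}}$ ; cette fois le coefficient de $dt$ est $\hat B$, non divisible par $x$, donc $E$ n'est pas invariant et l'ordre d'annulation vaut $a+1$.

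Dans les deux cas, en notant $m$ l'ordre d'annulation de $\pi^*\omega$ le long de $E$, la forme r\'eduite $\eta = x^{-m}\pi^*\omega$ engendre $\Omega^1_{\tilde Y}\otimes\pi^*N_{\mathcal{H}}\otimes\mathcal{O}(-mE)$, ce qui donne $N_{\tilde{\mathcal{H}}} = \pi^*N_{\mathcal{H}} - mE$. En reportant dans la relation du premier paragraphe, on trouve $K_{\tilde{\mathcal{H}}} = \pi^*K_{\mathcal{H}} + (1-m)E$, soit $-(a-1)E$ lorsque $E$ est invariant ($m=a$) et $-a\,E$ lorsque $E$ ne l'est pas ($m=a+1$) : c'est exactement l'\'enonc\'e. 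Le principal obstacle sera de v\'erifier soigneusement que l'annulation suppl\'ementaire de $\pi^*\omega$ sur $E$ \'equivaut pr\'ecis\'ement \`a la non-invariance (dicriticit\'e) de $E$, et de contr\^oler la r\'eduction de $\eta$ (pas de facteur commun autre que $x$), ce qui repose sur la primalit\'e de $A$ et $B$ ; on conclut en observant que la seconde carte $x=uv,\ y=v$ recouvre $E$ priv\'e d'un point et donne le m\^eme calcul.
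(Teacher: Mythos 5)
Votre preuve est correcte. Il faut toutefois signaler qu'ici le texte ne d\'emontre rien : la \og preuve \fg{} de l'article est un simple renvoi \`a \cite[Chapter 2]{MR2114696}, o\`u Brunella \'etablit ces formules par un calcul local du m\^eme type que le v\^otre (il travaille plut\^ot avec des champs de vecteurs et le fibr\'e tangent $T_{\mathcal{F}}$, alors que vous passez par le fibr\'e normal via $K_{\mathcal{H}}=K_Y+N_{\mathcal{H}}$ et $K_{\tilde Y}=\pi^*K_Y+E$ ; les deux variantes sont \'equivalentes, la v\^otre ayant l'avantage d'utiliser directement la $1$-forme $\omega$ de l'\'enonc\'e). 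Votre argument est complet : la dichotomie $m=a$ (cas $x\nmid \hat A+t\hat B$, $E$ invariant) contre $m=a+1$ (cas $x\mid \hat A+t\hat B$, $E$ dicritique) est exactement le point cl\'e, et la conclusion $K_{\tilde{\mathcal{H}}}=\pi^*K_{\mathcal{H}}+(1-m)E$ redonne bien les deux formules avec les bons signes. Un seul point d'exposition m\'erite d'\^etre explicit\'e : pour conclure dans le cas dicritique que $x\nmid\hat B$, la propri\'et\'e \og pas de facteur commun autre qu'une puissance de $x$ \fg{} ne suffit pas telle quelle ; il faut observer que $\hat A$ et $\hat B$ ne sont pas \emph{tous deux} divisibles par $x$, ce qui r\'esulte de ce que $a$ est le \emph{minimum} des ordres de $A$ et $B$ (sinon l'ordre d'annulation de $\omega$ en $p$ serait $>a$) ; alors $x\mid \hat A+t\hat B$ et $x\mid\hat B$ entra\^ineraient $x\mid\hat A$, contradiction. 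Avec cette pr\'ecision, votre r\'edaction fournit une d\'emonstration autonome de l'\'enonc\'e que l'article se contente de citer.
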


Signalons que si $\mathcal{C}$ est une courbe, $\pi^*(\mathcal{C})$ est la \textbf{transformée totale} de  $\mathcal{C}$ par l'éclatement $\pi$.
On utilisera le nom des diviseurs de $\Pd$ pour désigner leurs \textbf{transformées strictes} par $\tilde{\mathbb{P}}^2 \rightarrow \Pd$.
Par utilisations successives de cette proposition, on obtient $K_{\Ft}=2\,\delta-D_{{A}}-E_{{B}}-E_{{C}}-D_{{H}}-2\,E_{{H}}-E_{{G}}-E_{{I}}$.
%\changetext{3 cm}{}{}{}{}
\begin{figure}[htbp]
\caption{Chaînes et cycles \label{chaines et cycles}}
\center{\includegraphics[scale=0.8]{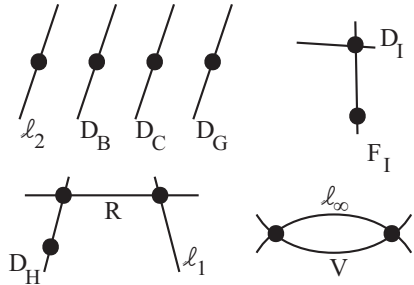}}
\end{figure}

\begin{figure}[htbp]
\caption{Désingularisation du feuilletage. \label{dessinglobaldesing}}
{\center{\includegraphics[angle=-90]{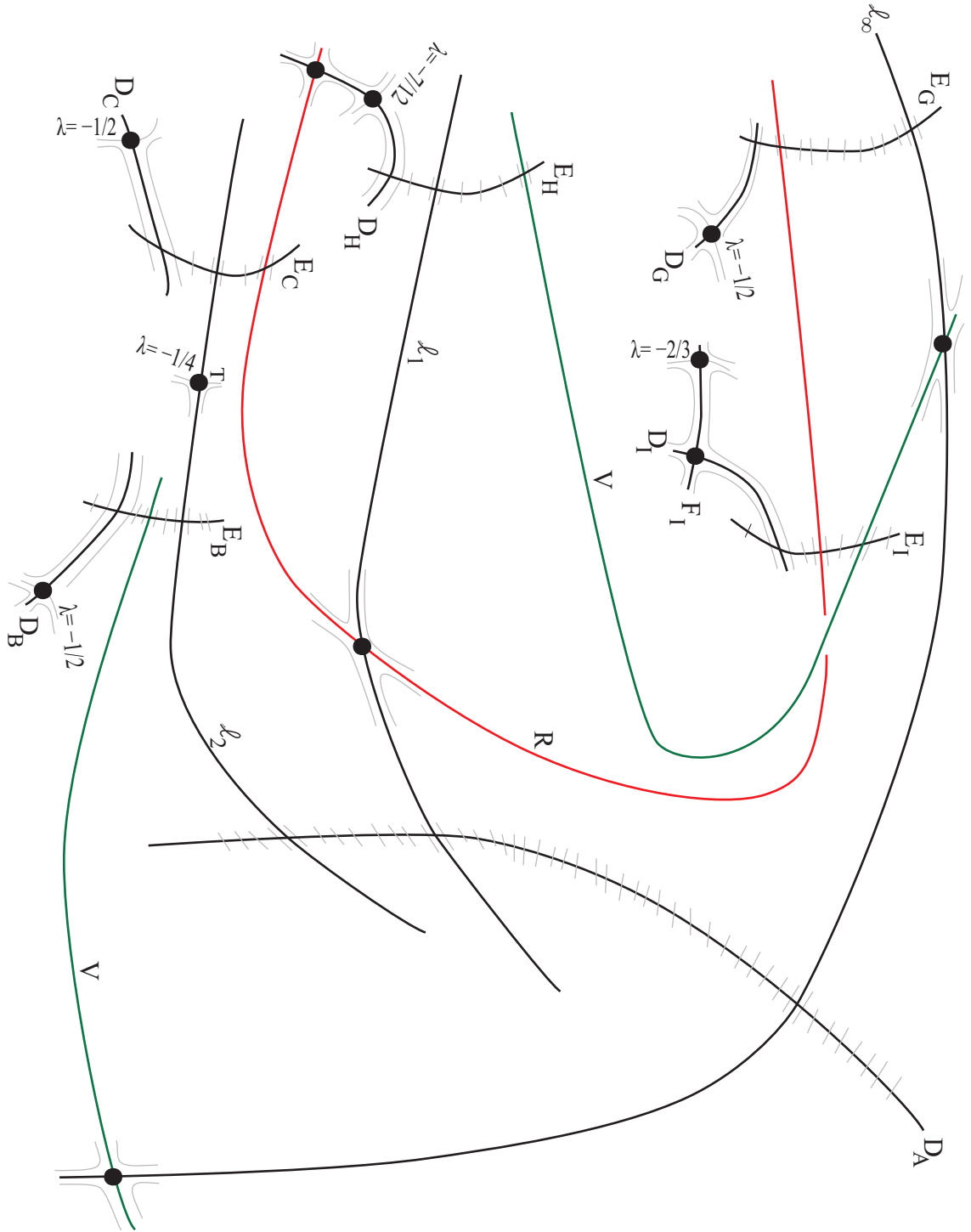}}}
\end{figure}
\clearpage

\subsubsection{\'Etude des courbes rationnelles invariantes} \label{courbesinv}
Montrons maintenant que $\tilde{\F}$ est relativement minimal et que toute composante d'une $\Ft$-chaîne est élément de 
$\mathcal{L}=\{\ell_1, \ell_2, \ell_{\infty}, V, R, D_B, D_C, D_G, D_H, D_I, F_I\}$.
Soit $\mathcal{C}_0$ une courbe rationnelle invariante de $\Ft$ qui n'appartient pas à $\mathcal{L}$.
On définit des inconnues pour le nombre d'intersections entre $\mathcal{C}_0$ et les diviseurs qui nous intéressent dans le tableau \ref{intcourbeinv}.

\begin{table}[htb] \caption{\label{intcourbeinv}Intersections de $\mathcal{C}_0$ avec nos diviseurs.}
\center{\scalebox{0.81}{\begin{tabular}{|c||c|c|c|c|c|c|c|c|c|c|c|c|c|c|c|c|c|c|}
\hline
$D$&$ R$ & $V$ & $\ell_{{1}}$ &$ \ell_{{2}}$ &$ \ell_{{\infty }}$ & $D_{{A}}$ & $D_{{B}}$ & $D_{{C}}$ &$ D_{{H}}$ & $D_{
{G}}$ & $D_{{I}}$ & $F_{{I}}$ & $E_{{H}}$ & $E_{{B}}$ & $E_{{C}}$ & $E_{{I}}$ & $E_{{G}}$ & $\delta$ \\
\hline
 $D.\mathcal{C}_0$&$0$&$0$&$0$&$\varepsilon_T$&$0$&$ n_A$& $\varepsilon_B$&$\varepsilon_C$&$\varepsilon_H$&$\varepsilon_G$&$0$&$\varepsilon_I$& $n_H$ & $n_B$ & $n_C$ & $n_I$ & $n_G$&$d$\\
\hline
\end{tabular}}}
\end{table}

Le degré de l'image de  $\mathcal{C}_0$ dans $\Pd$ est $d$. En appliquant le théorème de Bézout et en utilisant notre connaissance de la désingularisation du feuilletage, on obtient les équations suivantes qui lient les intersections lues dans $\Pd$ et celles lues dans $\tilde{\mathbb{P}}^2$.

 \begin{equation}  \label{equint} \left \lbrace \begin{array}{lr} 
  E_{ \ell_1}:&d=n_A+2n_H+ \varepsilon_H\\
E_{\ell_2}:&d=2n_B+ \varepsilon_B+2n_C+\varepsilon_C+ n_A+ \varepsilon_T\\
 E_{\ell_\infty}:&d=n_A+2n_G+\varepsilon_G\\
  E_R:&2d=n_H+\varepsilon_H+3n_I+\varepsilon_I+2n_G+\varepsilon_G+2 n_C+\varepsilon_C
\\
 E_V:&2d=2n_H+\varepsilon_H+3n_I+\varepsilon_I+2n_B+\varepsilon_B
 \end{array} \right.
 \end{equation}
D'autre part (cf \cite[Chapter $3$]{MR2114696}), l'auto-intersection de $\mathcal{C}_0$ peut être calculée en fonction des $\varepsilon_p$ à l'aide des indices de Camacho-Sad (qui correspondent aux $\lambda(p)$) : par la formule de Camacho-Sad,  on a  $\mathcal{C}_0^2=-\frac{1}{2}(\varepsilon_B+\varepsilon_C+\varepsilon_G)-\frac{7}{12}\varepsilon_H-\frac{2}{3}\varepsilon_I-\frac{1}{4}\varepsilon_T$. Comme les singularités sont réduites, on doit avoir, pour tout $p$, $\varepsilon_p \in \{0,1\}$. De plus, comme l'auto-intersection de $ \mathcal{C}_0$ est entière, les configurations envisageables pour les $\varepsilon_p$ se restreignent à celles données dans le tableau \ref{epsilon}  où l'on mentionne les valeurs de $\mathcal{C}_0^2$ qui y correspondraient.

\begin{table}[htb]
\caption{\label{epsilon} Auto-intersections envisageables pour $\mathcal{C}_0$.}
\center{\begin{tabular}{|c||c|c|c|c|c|c||c|}
\hline
n°&$\varepsilon_H$&$\varepsilon_I$&$\varepsilon_B$&$\varepsilon_C$&$\varepsilon_G$&$\varepsilon_T$&$\mathcal{C}_0^2$\\
\hline
\hline
$1$&$0$&$0$&$0$&$0$&$0$&$0$&$0$\\
\hline
$2$&$0$&$0$&$0$&$1$&$1$&$0$&$-1$\\
\hline
$3$&$0$&$0$&$1$&$0$&$1$&$0$&$-1$\\
\hline
$4$&$0$&$0$&$1$&$1$&$0$&$0$&$-1$\\
\hline
$5$&$1$&$1$&$0$&$0$&$1$&$1$&$-2$\\
\hline
$6$&$1$&$1$&$0$&$1$&$0$&$1$&$-2$\\
\hline
$7$&$1$&$1$&$1$&$0$&$0$&$1$&$-2$\\
\hline
$8$&$1$&$1$&$1$&$1$&$1$&$1$&$-3$\\
\hline

\end{tabular}}
\end{table}
Pour chacun de ces choix de $(\varepsilon_p)$, on peut résoudre le système linéaire $(\ref{equint})$. Seuls les choix n°$1$, $4$, $5$ et $8$ correspondent à des solutions entières pour ce système. L'ensemble des solutions $\sigma$ est alors un $\mathbb{Z}$-module de rang deux.
Toutefois on peut donner encore deux contraintes sur les inconnues du système $(\ref{equint})$. Premièrement, on peut calculer $\mathcal{C}_0^2$ en fonction de l'auto-intersection $d^2$ de sa projetée dans $\Pd$ et de la désingularisation $\tilde{\mathbb{P}}^2 \rightarrow \Pd$, en utilisant la proposition suivante.
\begin{prop}[voir \cite{MR507725}, p $187$ et p $476$]
Soient $ \pi : \tilde{Y} \rightarrow Y$ l'éclatement d'une surface $Y$ en un point $p$ et $E$ le diviseur exceptionnel.
\begin{enumerate}
\item Si $\mathcal{C}\subset Y$ est une courbe qui passe par $p$ avec multiplicité $m$ et  si $\tilde{\mathcal{C}}\subset \tilde{Y}$ est sa transformée stricte alors $\tilde{\mathcal{C}}^2=\mathcal{C}^2-m^2$.
\item De plus le diviseur canonique $K_{\tilde{Y}}$ de $\tilde{Y}$  est donné par  $K_{\tilde{Y}}=\pi^*(K_Y)+E$.
\end{enumerate}
\end{prop}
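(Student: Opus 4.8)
The plan is to treat the two assertions separately, each by reduction to a computation in a blow-up chart together with the two standard intersection facts $E^2 = -1$ and $\pi^*D\cdot E = 0$ for any divisor $D$ on $Y$. Throughout I work in local coordinates $(x,y)$ centred at $p$ and use the two standard charts of the blow-up, $x = u$, $y = uv$ and $x = u'v'$, $y = v'$, in each of which $E$ is a coordinate hyperplane.

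For the first assertion the crux is to relate the total transform $\pi^*\mathcal{C}$ to the strict transform $\tilde{\mathcal{C}}$. First I would write a local equation $f = 0$ for $\mathcal{C}$ near $p$ and expand it into homogeneous parts, $f = f_m + f_{m+1} + \cdots$ with $f_m \neq 0$, where $m$ is by definition the multiplicity of $\mathcal{C}$ at $p$. In the chart $x = u$, $y = uv$ one has $\pi^* f = u^m(f_m(1,v) + u f_{m+1}(1,v) + \cdots)$, and since $\{u = 0\}$ is a local equation for $E$ this exhibits the divisorial identity $\pi^*\mathcal{C} = \tilde{\mathcal{C}} + mE$. I would then compute $\tilde{\mathcal{C}}^2$ by expanding $\tilde{\mathcal{C}}^2 = (\pi^*\mathcal{C} - mE)^2 = (\pi^*\mathcal{C})^2 - 2m\,(\pi^*\mathcal{C}\cdot E) + m^2 E^2$. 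By the projection formula $(\pi^*\mathcal{C})^2 = \mathcal{C}^2$ and $\pi^*\mathcal{C}\cdot E = \mathcal{C}\cdot\pi_* E = 0$, while $E^2 = -1$; substituting gives $\tilde{\mathcal{C}}^2 = \mathcal{C}^2 - m^2$. As a by-product, intersecting the same identity with $E$ yields $\tilde{\mathcal{C}}\cdot E = m$, which is exactly the intersection count feeding the B\'ezout relations used just above.

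For the second assertion I would compare canonical generators. A local generator of $\omega_Y$ near $p$ is $dx\wedge dy$; in the chart $x = u$, $y = uv$ the Jacobian of $\pi$ gives $\pi^*(dx\wedge dy) = u\,du\wedge dv$, so this $2$-form vanishes to order exactly one along $E = \{u=0\}$ (the symmetric chart confirms the order is one and independent of the chart). Since $\pi$ is an isomorphism away from $E$, the pullback $\pi^*(dx\wedge dy)$ and a local generator of $\omega_{\tilde{Y}}$ agree up to a unit off $E$, whence $K_{\tilde{Y}} = \pi^* K_Y + E$.

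The only substantive local point is the identity $\pi^*\mathcal{C} = \tilde{\mathcal{C}} + mE$, namely that the order of vanishing of $\pi^* f$ along $E$ equals the multiplicity $m$; this is clear from the factorisation above once one notes that $f_m(1,v)$ is not the zero polynomial whenever $f_m \neq 0$, so no higher power of $u$ divides the residual factor. Everything else is the formalism of the projection formula together with $E^2 = -1$ and $\pi^*D\cdot E = 0$, so I expect the bookkeeping, rather than any genuine difficulty, to be the only thing to monitor.
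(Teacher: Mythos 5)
Your proof is correct. The paper gives no proof of this proposition at all --- it is quoted as a standard fact with a citation to Griffiths--Harris (pp.~187 and 476) --- and your argument (the chart computation giving $\pi^*\mathcal{C}=\tilde{\mathcal{C}}+mE$, the projection formula combined with $E^2=-1$, and the Jacobian computation $\pi^*(dx\wedge dy)=u\,du\wedge dv$ for the canonical generator) is precisely the standard argument found in that reference, so there is nothing to compare beyond noting that your write-up supplies what the paper delegates to the literature.
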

En appliquant successivement la première propriété, on obtient
\begin{equation} \label{autoint} \mathcal{C}_0^2=d^2-n_A^2-3n_I^2-2\varepsilon_I n_I-\varepsilon_I-\sum_{p=B,C,G,H}((n_p+\varepsilon_p)^2+n_p^2).
\end{equation}
En appliquant la seconde propriété et en sachant $K_{\Pd}=-3\delta$, on obtient : $K_{\tilde{\mathbb{P}}^2}=-3\,\delta+F_{{I}}+2\,D_{{I}}+D_{{C}}+D_{{H}}+D_{{G}}+D_{{A}}+D_{{B}}+2\,E_
{{B}}+2\,E_{{C}}+2\,E_{{G}}+2\,E_{{H}}+3\,E_{{I}}
$.
Deuxièmement, on utilise la formule du genre : $0=\frac{1}{2}\mathcal{C}_0\cdot(\mathcal{C}_0+K_{\tilde{\mathbb{P}}^2})+1$, pour obtenir 
\begin{equation} \label{genre}
0=1+\frac{1}{2}\,{d}^{2}-\frac{3}{2}\,d+\frac{1}{2}n_A+\frac{3}{2}n_I-\varepsilon_I n_I-\frac{1}{2}n_A^{2}-\frac{3}{2}n_I^{2}+\sum_{p=B,C,G,H}n_p(1-n_p-\varepsilon_p).
\end{equation}
Pour chacun des choix n°$1$, $4$, $5$ et $8$  en injectant les valeurs des $\varepsilon_p$, la valeur de $\mathcal{C}_0^2$ correspondante, ainsi qu'une paramétrisation de $\sigma$ par $(\lambda,\beta)\in\mathbb{Z}^2$ dans (\ref{autoint}), on arrive  à paramétrer $\lambda$ en fonction de $\beta$ puis à conclure en utilisant (\ref{genre}).
Les seules possibilités cohérentes sont alors celles données dans le tableau \ref{valpos}.
\begin{table}[htb] \caption{\label{valpos}}
\center{\begin{tabular}{|c|c|c|c|c|c|c|c|c|c|c|c|c|}
\hline
$n_H$&$\varepsilon_H$&$n_I$&$\varepsilon_I$&$n_B$&$\varepsilon_B$&$n_C$&$\varepsilon_C$&$n_G$&$\varepsilon_G$&$n_A$&$\varepsilon_T$&$d$\\
\hline

$1$&$ 1$& $2$& $1$&$ 1$&$ 0$& $0$& $0$& $1$&$ 1$& $3$& $1$&$ 6$\\
\hline
$3$& $1$& $5$& $1$&$ 2$&$ 1$&$ 0$& $1$& $3$& $1$& $7$& $1$& $14$\\
\hline

\end{tabular}}
\end{table}
Dans les deux cas, l'éventuelle courbe invariante $\mathcal{C}_0$ traverse plus de trois singularités, ce qui indique qu'elle n'est pas dans une $\Ft$-chaîne, de plus son auto-intersection est $-3$ ou $-2$, ce qui indique que le feuilletage $\Ft$ est relativement minimal et permet de conclure notre étude.

\subsubsection{Décomposition de Zariski}
La description de la désingularisation  que nous avons faite permet de donner, dans le tableau \ref{inter}, la matrice de la forme d'intersection sur $\tilde{\mathbb{P}}^2$ restreinte au sous-espace dont une famille génératrice est $$\mathcal{S}=(R,V,\ell_1,\ell_2,\ell_{\infty},D_A,D_B,D_C,D_H,D_G,D_I,F_I,E_H,E_B,E_C,E_I,E_G,\delta).$$

\begin{table}[b]
\caption{Matrice de la forme d'intersection pour la famille $\mathcal{S}$. \label{inter}}
\center{
$\left[ \begin{smallmatrix}
 -4&0&1&0&0&0&0&0&1&0&0&0&0&0&1&1&1&2
\\\noalign{\medskip}0&-3&0&0&2&0&0&0&0&0&0&0&1&1&0&1&0&2
\\\noalign{\medskip}1&0&-2&0&0&1&0&0&0&0&0&0&1&0&0&0&0&1
\\\noalign{\medskip}0&0&0&-4&0&1&0&0&0&0&0&0&0&1&1&0&0&1
\\\noalign{\medskip}0&2&0&0&-2&1&0&0&0&0&0&0&0&0&0&0&1&1
\\\noalign{\medskip}0&0&1&1&1&-1&0&0&0&0&0&0&0&0&0&0&0&0
\\\noalign{\medskip}0&0&0&0&0&0&-2&0&0&0&0&0&0&1&0&0&0&0
\\\noalign{\medskip}0&0&0&0&0&0&0&-2&0&0&0&0&0&0&1&0&0&0
\\\noalign{\medskip}1&0&0&0&0&0&0&0&-2&0&0&0&1&0&0&0&0&0
\\\noalign{\medskip}0&0&0&0&0&0&0&0&0&-2&0&0&0&0&0&0&1&0
\\\noalign{\medskip}0&0&0&0&0&0&0&0&0&0&-2&1&0&0&0&1&0&0
\\\noalign{\medskip}0&0&0&0&0&0&0&0&0&0&1&-2&0&0&0&0&0&0
\\\noalign{\medskip}0&1&1&0&0&0&0&0&1&0&0&0&-1&0&0&0&0&0
\\\noalign{\medskip}0&1&0&1&0&0&1&0&0&0&0&0&0&-1&0&0&0&0
\\\noalign{\medskip}1&0&0&1&0&0&0&1&0&0&0&0&0&0&-1&0&0&0
\\\noalign{\medskip}1&1&0&0&0&0&0&0&0&0&1&0&0&0&0&-1&0&0
\\\noalign{\medskip}1&0&0&0&1&0&0&0&0&1&0&0&0&0&0&0&-1&0
\\\noalign{\medskip}2&2&1&1&1&0&0&0&0&0&0&0&0&0&0&0&0&1\end{smallmatrix}
 \right] $}
\end{table}

D'après la section \ref{courbesinv}, $\Ft$ est relativement minimal et seuls des éléments de $\mathcal{L}$ entrent dans le support de la partie négative $N$ de la décomposition de Zariski de $K_{\Ft}$.
Ainsi, les $\Ft$-chaînes maximales sont les suivantes : 
$D_H+R+\ell_1$;
 $D_I+F_I$;
$\ell_2$;
 $D_B$;
 $D_C$ et
 $D_G$.

Le support de $N$ est donc $D_G\cup D_C \cup D_B  \cup \ell_2 \cup D_I \cup F_I \cup D_H \cup R \cup \ell_1$.
Ainsi $N=aD_G+ bD_C+ cD_B+d \ell_2+ e D_I + f F_I +g D_H + h R + i \ell_1$.
Chacune des composantes de ce support est orthogonale à $P \num K_{\Ft} -N$, ce qui donne un système d'équations linéaires simple qu'on résout :
$$N= \frac{1}{12}\,D_{{H}}+\frac{1}{6}\,R+{\frac{7}{12}}\,\ell_{{1}}+\frac{1}{2}\,D_{{G}}+\frac{1}{2}\,D_{{B}}+\frac{1}{2}\,D_{{C}}+\frac{1}{3}\,F_{{I}}+\frac{2}{3}\,D_{{I}}+\frac{1}{4}\,\ell_{{2}}.$$

On voit alors que $P \num K_{\Ft}-N$ n'est pas numériquement trivial puisque $(K_{\Ft}-N)\cdot\delta=\frac{5}{6}$, puis on constate que $P^2=0$.
Nous venons donc d'obtenir : $$\nu(\F)=1.$$

\begin{theo}\label{theomod}
À transformation birationnelle près, le feuilletage $\F$ est un feuilletage modulaire.
\begin{proof}
Il s'agit d'appliquer la proposition \ref{cor} à $\F$, vérifions en les hypothèses.
Une structure transversalement projective pour $\F$ est donnée par $\tilde{\calR}$. Par le lemme \ref{lemmquotient} et la section \ref{monodlin}, nous savons que le groupe de monodromie de $\tilde{\calR}$ est Zariski-dense donc non virtuellement abélien. Par la section \ref{quotient}, si $\tilde{\calR}$ était birationnellement tiré-en-arrière d'un feuilletage de Riccati au dessus d'une courbe, il en serait de même pour $\calR$ ; ce qui est exclu par le lemme \ref{lemme non pb}. Comme on vient de voir $\nu(\F)=1$, on peut conclure.
\end{proof}
\end{theo}

\section{Raffinements}\label{raffinements}
On cherche a identifier précisément le groupe $\Gamma$ qui apparaît dans la construction de la surface modulaire sous-jacente à $\F$.
Pour cela, on trouve le feuilletage dual $\G$ de $\F$ et sa structure transversalement projective, puis calcule les représentations de monodromie pour la structure transverse de $\F$ et celle de $\G$. Par le corollaire \ref{coro1} ci-dessous, le produit des deux représentations a pour image $\Gamma$.

\begin{lemm}\label{lemclean}
Soit $\F$ un feuilletage holomorphe singulier sur une surface projective lisse $X$.
Soit $D$ un diviseur sur $X$.
Soient $\Sigma=(P,\calR,\sigma)$ une structure transversalement projective pour $\F$ et $\Sigma_0=(P_0,\calR_0,\sigma_0)$ une structure transversalement projective pour $\F_{\vert X \setminus D}$.

Si la monodromie de $\calR$ n'est pas virtuellement abélienne et $\F_{\vert X \setminus D}$ ne possède pas d'intégrale première méromorphe non constante, alors $(P_0,\calR_0,\sigma_0)$ est biméromorphiquement équivalent à $(P,\calR,\sigma)_{\vert X \setminus D}$.
En particulier $\calR_{\vert X \setminus D}$ et $\calR_0$ ont même représentation de monodromie (après réduction).

\begin{proof}
Comme $X$ est projective, à transformation biméromorphe près, $\Sigma$ est le fibré trivial sur $X$ muni d'une équation de Riccati globale $\calR : dz=\alpha+\beta z+\gamma z^2$ et de la section triviale $z=0$.
D'autre part, d'après \cite[Remark 2.3.]{MR2337401}, $\Sigma_0$ peut être décrite (biméromorphiquement) par un recouvrement d'ouverts $(U_i)$ de $X \setminus D$, des équations de Riccati $(\calR_i)$ sur les $U_i$ et des transitions \textbf{méromorphes} entre ces feuilletages, qui respectent les sections $z=0$.

Par changements de trivialisations méromorphes au dessus des $U_i$, on peut supposer $\calR_i: dz=\alpha +\beta z+\gamma_i z^2$ (voir \cite[eq $(2.5)$ p $728$]{MR2337401}). Alors les transitions entre les $\calR_i$ sont triviales : les $1$-formes $\gamma_i$ se recollent en une $1$-forme $\tilde{\gamma}$ méromorphe sur $X\setminus D$.
Les triplets $(\alpha,\beta,\gamma)$ et $(\alpha,\beta,\tilde{\gamma})$ donnent alors deux structures transversalement projectives sur le fibré trivial pour $\F_{\vert X\setminus D}$
et on est dans le cadre de la preuve du lemme \ref{SLP}.
Ainsi, si $\gamma\neq \tilde{\gamma}$, soit il existe une une intégrale première méromorphe pour $\F_{\vert X\setminus D}$, soit la monodromie de $\calR_{\vert X\setminus D}$ est virtuellement abélienne. Ces deux situations sont exclues par hypothèse.
\end{proof}
\end{lemm}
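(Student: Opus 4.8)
La principale id\'ee est de ramener la situation au cadre du lemme \ref{SLP}, dont l'hypoth\`ese de projectivit\'e ne s'applique pas directement ici puisque $\Sigma_0$ n'est d\'efinie que sur la vari\'et\'e quasi-projective $X\setminus D$. D'abord, j'utiliserais la projectivit\'e de $X$ pour trivialiser birationnellement le $\Pu$-fibr\'e $P$ et, quitte \`a composer par une transformation de jauge m\'eromorphe, pour normaliser $\Sigma$ en une \'equation de Riccati globale $\calR : dz=\alpha+\beta z+\gamma z^2$ sur le fibr\'e trivial, avec section $z=0$. C'est exactement l'\'etape o\`u la projectivit\'e de $X$ est requise, comme dans la preuve du lemme \ref{SLP}.

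Ensuite, je traiterais $\Sigma_0$ par sa description locale : faute de pouvoir trivialiser le fibr\'e sur $X\setminus D$ non projective, j'invoquerais \cite[Remark 2.3.]{MR2337401} pour d\'ecrire $\Sigma_0$ (bim\'eromorphiquement) par un recouvrement $(U_i)$, des \'equations de Riccati $\calR_i$ sur les $U_i$ et des transitions m\'eromorphes respectant la section $z=0$. Comme $\F$ et la section $z=0$ sont communs aux deux structures, je normaliserais chaque $\calR_i$ par une transformation de jauge pr\'eservant $z=0$ de fa\c{c}on \`a obtenir $\calR_i : dz=\alpha+\beta z+\gamma_i z^2$, avec les m\^emes $\alpha,\beta$ que pour $\calR$ ; le point crucial est qu'apr\`es cette normalisation les transitions deviennent triviales, de sorte que les $\gamma_i$ se recollent en une unique $1$-forme m\'eromorphe $\tilde\gamma$ sur $X\setminus D$.

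On disposerait alors de deux structures transversalement projectives $(\alpha,\beta,\gamma)$ et $(\alpha,\beta,\tilde\gamma)$ pour le m\^eme feuilletage $\F_{\vert X\setminus D}$ sur le fibr\'e trivial, ce qui place la discussion exactement dans le cadre analytique de la preuve du lemme \ref{SLP} (via \cite[Proposition 2.1]{MR1432053}). J'en d\'eduirais que, si $\gamma\neq\tilde\gamma$, alors soit $\F_{\vert X\setminus D}$ admet une int\'egrale premi\`ere m\'eromorphe non constante, soit la monodromie de $\calR_{\vert X\setminus D}$ est virtuellement ab\'elienne. Les deux hypoth\`eses du lemme excluent pr\'ecis\'ement ces deux \'eventualit\'es (en notant que la monodromie de $\calR_{\vert X\setminus D}$ co\"incide avec celle de $\calR$), d'o\`u $\gamma=\tilde\gamma$, l'\'equivalence bim\'eromorphe annonc\'ee, et enfin l'\'egalit\'e des repr\'esentations de monodromie apr\`es r\'eduction.

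La principale difficult\'e me semble \^etre le passage au quasi-projectif : comme on ne peut pas globaliser la trivialisation du fibr\'e de $\Sigma_0$, il faut contr\^oler que la normalisation locale $\calR_i : dz=\alpha+\beta z+\gamma_i z^2$ est compatible avec les transitions m\'eromorphes, afin que les $\gamma_i$ se recollent effectivement en $\tilde\gamma$. Autrement dit, il faut v\'erifier que l'ambigu\"it\'e de jauge pr\'eservant \`a la fois $z=0$ et la partie $\alpha+\beta z$ est assez rigide pour forcer la trivialit\'e des transitions sur les $\gamma_i$ ; cela reposerait sur la forme explicite des transformations admissibles donn\'ee en \cite[eq $(2.5)$ p $728$]{MR2337401}.
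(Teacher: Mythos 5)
Votre preuve suit essentiellement la même démarche que celle de l'article : trivialisation birationnelle de $\Sigma$ grâce à la projectivité de $X$, description locale de $\Sigma_0$ via \cite[Remark 2.3.]{MR2337401}, normalisation des équations locales en $dz=\alpha+\beta z+\gamma_i z^2$ pour forcer la trivialité des transitions et le recollement des $\gamma_i$ en $\tilde\gamma$, puis application de l'argument du lemme \ref{SLP} pour exclure $\gamma\neq\tilde\gamma$ par les deux hypothèses. La difficulté que vous signalez en fin de proposition (rigidité de la jauge préservant $z=0$ et $\alpha+\beta z$) est précisément ce que garantit la forme explicite \cite[eq $(2.5)$ p $728$]{MR2337401} invoquée au même endroit dans l'article ; votre argument est donc correct et conforme.
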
 
\begin{coro}\label{coro1}
Soit $Z_{\Gamma}$ une surface modulaire, $\F$ un des feuilletages modulaires sur $Z_{\Gamma}$ et $D$ la réunion de ses courbes algébriques invariantes. Si $\F$ possède une structure transversalement projective $\Sigma$ sur $Z_{\Gamma}$ à monodromie non virtuellement abélienne, alors la monodromie de $\Sigma_{\vert Z_{\Gamma}\setminus D}$  est une des projections de la représentation tautologique de $Z_{\Gamma}\setminus D$.
\begin{proof}
Toutes les feuilles de $\F_{\vert X\setminus D}$ sont denses en vertu du résultat de quasi-minimalité de \cite{MR2142243} qui s'étend au cas où $\Hhyp^2/\Gamma$ est compact. En effet, les feuilles de $\F_{\vert X\setminus D}$ sont denses si la projection $\Gamma_1$ de $\Gamma$ est dense, ce que nous allons démontrer.

Par irréductibilité de $\Gamma$, grâce à \cite[Theorem $1$]{MR0145106}, on sait que cette projection ne peut être discrète. Comme $\Gamma$ est un réseau de $\mathrm{PSL}_2(\R)^2$, il est Zariski dense, donc sa projection $\Gamma_1$ aussi. Pour conclure, on utilise que tout sous groupe Zariski dense de $\mathrm{PSL}_2(\R)$ est dense ou discret.

Par cette propriété de densité, on obtient que les intégrales premières méromorphes de $\F_{\vert X\setminus D}$ sont constantes. Par le lemme ci-dessus, la monodromie de $\Sigma_{\vert Z_{\Gamma}\setminus D}$ est donc celle de la structure transverse du lemme \ref{lemmemonodmodulaire}.

\end{proof}
\end{coro}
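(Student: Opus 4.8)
Le plan est d'appliquer le lemme~\ref{lemclean} en prenant pour $X$ la surface $Z_\Gamma$, pour $\Sigma$ la structure transverse donn\'ee par hypoth\`ese, et pour $\Sigma_0$ la structure transversalement projective tautologique du lemme~\ref{lemmemonodmodulaire} restreinte \`a $Z_\Gamma\setminus D$ ; cette restriction a un sens car les cha\^ines et cycles de courbes rationnelles, \'etant invariants, sont contenus dans $D$, d'o\`u $Z_\Gamma\setminus D\subset\mathcal{V}$. La monodromie de $\Sigma_0$ est l'une des projections $\tau_i$ de la repr\'esentation tautologique ; il suffira donc de montrer que $\Sigma_{\vert Z_\Gamma\setminus D}$ et $\Sigma_0$ sont bim\'eromorphiquement \'equivalentes, car le lemme~\ref{lemclean} donne alors l'\'egalit\'e (apr\`es r\'eduction) de leurs monodromies. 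Ce lemme r\'eclame deux hypoth\`eses : la monodromie de $\calR$ n'est pas virtuellement ab\'elienne, ce qui est donn\'e ; et $\F_{\vert Z_\Gamma\setminus D}$ n'admet pas d'int\'egrale premi\`ere m\'eromorphe non constante. L'essentiel du travail porte donc sur cette seconde condition.

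Pour l'obtenir, je montrerais que toutes les feuilles de $\F_{\vert Z_\Gamma\setminus D}$ sont denses : une int\'egrale premi\`ere m\'eromorphe, constante le long des feuilles, serait alors constante tout court. Cette densit\'e des feuilles r\'esulte de la densit\'e analytique de la projection $\Gamma_1$ de $\Gamma$ dans $\mathrm{PSL}_2(\R)$, en vertu du r\'esultat de quasi-minimalit\'e de Mendes et Pereira \cite{MR2142243}, dont il faudra constater qu'il reste valable lorsque $\Hhyp^2/\Gamma$ est compact.

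Il reste alors \`a \'etablir la densit\'e de $\Gamma_1$, ce que j'obtiendrais en trois temps. D'abord, $\Gamma$ \'etant irr\'eductible (d\'efinition~\ref{modulaire}), sa projection $\Gamma_1$ ne peut \^etre discr\`ete, d'apr\`es \cite[Theorem~$1$]{MR0145106}. Ensuite, comme $\Gamma$ est un r\'eseau de $\mathrm{PSL}_2(\R)^2$, il est Zariski dense (densit\'e de Borel), et il en va de m\^eme de sa projection $\Gamma_1$ dans le facteur $\mathrm{PSL}_2(\R)$. Enfin, tout sous-groupe Zariski dense de $\mathrm{PSL}_2(\R)$ est dense ou discret ; n'\'etant pas discret, $\Gamma_1$ est donc dense.

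Le point d\'elicat sera pr\'ecis\'ement cette \'etape de densit\'e, o\`u l'on combine la th\'eorie des r\'eseaux irr\'eductibles (non-discr\'etion des projections) avec la quasi-minimalit\'e pour passer de la densit\'e de $\Gamma_1$ \`a celle des feuilles, en prenant garde \`a la validit\'e de l'\'enonc\'e de quasi-minimalit\'e dans le cas compact. Une fois la densit\'e acquise, l'absence d'int\'egrale premi\`ere m\'eromorphe non constante et l'application du lemme~\ref{lemclean} ach\`event la preuve.
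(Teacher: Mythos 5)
Your proposal is correct and follows essentially the same route as the paper: verify the two hypotheses of the lemme~\ref{lemclean} (monodromie non virtuellement abélienne, donnée ; absence d'intégrale première méromorphe non constante, obtenue via la densité des feuilles), la densité des feuilles étant déduite de la quasi-minimalité de \cite{MR2142243} et de la densité de la projection $\Gamma_1$, elle-même établie par \cite[Theorem~1]{MR0145106}, la densité de Zariski des réseaux et la dichotomie dense/discret. Votre seule addition est d'expliciter que $Z_\Gamma\setminus D\subset\mathcal{V}$, ce que le papier laisse implicite ; c'est un point de soin bienvenu mais ce n'est pas une différence d'approche.
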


\subsection{Feuilletage dual}
Le résultat suivant est connu, on l'utilise pour chercher le feuilletage dual de $\F$.
\begin{theo}\label{minimal} Soit $Z_{\Gamma}$ une surface modulaire.
\begin{enumerate} 

 \item \label{minimal1} Les feuilletages $\F_{\Gamma}$ et $\G_{\Gamma}$ sont minimaux au sens de \cite{MR2114696}.
 \item \label{minimal2} De plus, leur diviseur de tangence est réduit  et son support est la réunion des chaînes de Hirzebruch-Jung et des cycles de courbes rationnelles.

 \end{enumerate}
\end{theo}
\begin{proof}

D'après la discussion \cite[pp $134-135$]{MR2114696}, les courbes algébriques irréductibles invariantes par $\F_{\Gamma}$ sont seulement les composantes des chaînes et cycles cités plus haut et, sur les cycles, la tangence entre les feuilletages est simple et les singularités sont réduites.
Sur les chaînes de Hirzebruch-Jung, le fait que les singularités  soient réduites  et la tangence simple peut se voir par un calcul aisé à l'aide de \cite[\S $3$ pp $220-223$]{MR0367276} qui donne des descriptions locales explicites de $\Hhyp^2 \rightarrow X_{\Gamma}$  et de la désingularisation de $X_{\Gamma}$.   Ainsi \ref{minimal2}. est démontré.  

Pour \ref{minimal1}, il suffit de voir que $\F_{\Gamma}$ est relativement minimal.
En effet, d'après \cite[Theorem 1 p 75]{MR2114696} et sa preuve, si $\F_{\Gamma}$ est relativement minimal sans être minimal, alors il est birationnellement un feuilletage de Riccati, le feuilletage très spécial de Brunella ou une fibration rationnelle, mais cela est exclu par la classification birationnelle des feuilletages.
Les composantes des chaînes et des cycles sont d'auto-intersection inférieure à $-2$ et $\F_{\Gamma}$ est réduit, il est donc relativement minimal, ce qui donne \ref{minimal1}.
\end{proof}

Par le point $\ref{minimal1}$ du théorème \ref{minimal}, comme $\Ft$ est relativement minimimal, ce qu'on a obtenu dans la section $2$ est $(\tilde{\mathbb{P}}^2,\Ft)=(Z_{\Gamma},\F_{\Gamma})$, pour un certain $\Gamma$; c'est la définition de la minimalité. On se propose de déterminer le feuilletage $\G$ sur $\Pd$ qui correspond au feuilletage $\G_{\Gamma}$ sur $\tilde{\mathbb{P}}^2$. Par le point $\ref{minimal2}$ du théorème \ref{minimal}, les feuilletages $\F$ et $\G$  ont un diviseur de tangence réduit $Tang$, donné par le lieu polaire de la structure transverse de $\F$, c'est à dire de degré $7$. De plus, le diviseur de tangence de deux feuilletages sur $\Pd$ de degrés  $d$ et $d'$  est de degré $d+d'+1$. Comme $\F$ est de degré $3$ on obtient que $\G$ l'est aussi.
Par un calcul affine, on voit que l'ensemble des feuilletages de degré $3$ ayant $Tang -\ell_{\infty}$ parmi leurs courbes invariantes est un pinceau $(\F_t)_{t \in \Pu}$ dont seul un élément ne laisse pas invariant $\ell_{\infty}$. Ce pinceau est donné par $\omega_t=P_tdx+Q_t dy$ où \footnotesize $P_t=-12\, \left( 1+3\,y \right) y \left( -2\,ty-y-10\,t+36\,tx+3\,x
 \right) $ et $Q_t= \left( -81-162\,t \right) {x}^{3}+ \left( -162\,y+756\,ty+18+306\,t \right) {x}^{2}+ \left( -36\,{y}^{2}t-18\,{y}^{2}+45\,y-270\,ty-60\,t
 \right) x+10\,y \left( y+t \right)$.
\normalsize\\
On remarque que
 \begin{equation*}  \left \lbrace \begin{array}{l} 
   s_1=\left \{(x,y)=\left( \,{\frac {10t}{9(1+2\,t)}},\,{\frac {20t \left( 3\,t-1 \right) }{ 3\left( 1+2\,t \right) ^{2}}}\right) \right \}\subset R \ \mbox{et}\\
s_2=\left \{(x,y)=\left(\,{\frac {10t \left( -9+2\,t \right) }{3(44\,{t}^{2}-96\,t-9)}},\,{\frac {-100{t}^{2}}{44\,{t}^{2}-96\,t-9}}\right) \right \} \subset V
 \end{array} \right.
 \end{equation*} sont deux singularités de $\F_t$, cela va nous permettre de déterminer le $t$ tel que $\F_t=\G$. En effet, comme $\G_{\Gamma}$ n'a pas de singularité sur $R$ et $V$ en dehors des intersections avec les autres courbes rationnelles invariantes, le $t$ recherché doit être tel que $s_1(t)\in\{C,D,G,H,I\}$ et $s_2(t)\in \{B,E,F,H,I\}$. La seule valeur de $t$ qui satisfasse ces deux conditions est $t=3/4$, on a donc finalement $\G=\F_{3/4}$.
On a trouvé une involution holomorphe de $\tilde{\mathbb{P}}^2$ (voir  section \ref{invol})  qui  échange $\F$ et $\G$, ainsi une structure transversalement projective pour $\G$, unique d'après le lemme \ref{SLP}, peut être obtenue en tirant en arrière celle de $\F$ par $(Id \times \sigma)$.

\subsection{L'involution \label{invol}}

Cette involution est donnée dans la carte affine $(x,y)$ de $\Pd$ par : $$\sigma : (x,y) \mapsto \left({\frac {3\,y \left( 3\,y+13 \right) x-y \left( 7\,y+9 \right) }{
 \left( 135\,y+9 \right) x-3\,y \left( 3\,y+13 \right) }},y\right).$$ On voit que c'est une transformation de  de Joncquières.

On l'a découverte en étudiant les tangences entre le pinceau des droites issues de $A$ et les feuilletages $\F$ et $\G$.
C'est une transformation de jauge méromorphe du $\Pu$-fibré $(x,y)\mapsto y$ d'espace total l'éclatement de $\Pd$ en $A$, qui est holomorphe en dehors de $\{(y-1)y=0\}$.
Quand on la conjugue par la désingularisation de $\F$, on obtient une transformation holomorphe de $\tilde{\mathbb{P}}^2=\tilde{Y}_{\Gamma}$ qui échange $D_I$ et $F_I$ ainsi que $E_I$ et $\{y=1\}$.
De même, elle échange $\ell_1$ et $D_H$ en fixant globalement $E_H$ et en réalisant un automorphisme non-trivial de $R$.
Comme indiqué précédemment, cette involution échange $\F$ et $\G$. 

\subsection{Calcul de la monodromie des structures transverses} \label{monod}
On souhaite comprendre ici les représentations de monodromie de $\tilde{\calR}$ et $(Id \times \sigma)^*\tilde{\calR}$.
%\begin{rema}
%Soit $\calR_0$ un feuilletage de Riccati au dessus de $X$ et $D$ son lieu polaire. Soit $D_1$ une composante de $D$. Si, localement, par transformation de jauge méromorphe, on peut faire disparaître le pôle $D_1$, alors $\calR_0$ n'a pas de monodromie locale près du point générique de $D_1$. L'hypersurface $D_1$ est appelé \textbf{pôle apparent} de $\calR_0$. Si $D'$ est l'hypersurface formée des composantes de $D$ non apparentes, alors $D'$ est appelé \textbf{lieu polaire non apparent} de $\calR_0$ et la monodromie de $\calR_0$ se réduit à $\rho : \pi_1(X\setminus D')\rightarrow \mathrm{PSL}_2(\C).$ 
%\end{rema}
\subsubsection{Groupe fondamental}
Un préalable à notre calcul de monodromie est la compréhension du groupe fondamental du complémentaire d'une courbe dans une surface.
L'outil principal utilisé pour cela est le théorème topologique suivant, dû à Zariski et Van-Kampen :
\begin{theo}
Soit $p : E \rightarrow B$ un fibré localement trivial qui possède une section $s$, avec $E$ connexe par arcs. Soit $b \in B$ et $F_b$ sa fibre.

Le groupe fondamental de $E$ est donné par la suite exacte scindée suivante  $0\rightarrow \pi_1(F_{b},b)\stackrel{i_*}{\rightarrow} \pi_1(E,b)\stackrel{\pi_*}{\rightarrow}\pi_1(B,b) \rightarrow 0$. La section est donnée par $s_*$.
\end{theo} 
\begin{proof}
Voir \cite{shimada} ou \cite[Theorem 2.1 p 170]{MR2830090}
\end{proof}
Précisons l'action du facteur  $\pi_1(B,b)$ sur $\pi_1(F_{b},b)$ pour le produit semi-direct $\pi_1(B,b) \ltimes \pi_1(F_{b},b)$ induit par cette suite exacte. Soit $\gamma : [0,1] \rightarrow B $ un lacet partant de $b$, $\gamma^*E$ est un fibré localement trivial de base contractile, il est donc trivialisable : $\gamma^*E \cong [0,1] \times F_b$ et tout lacet $\tau_0$ de $F_b$ de point de base $b$ se déforme continument en $\tau_t$, un lacet de point de base $\gamma(t)$ dans la fibre $F_{\gamma(t)}$. On définit ainsi une action de $\gamma \in \pi_1(B,b)$ sur $\pi_1(F_{b},b)$ en posant $\tau_0\cdot\gamma=\tau_1$ ; c'est l'action qui intervient dans la structure de produit semi-direct mentionnée ci-haut.

 Dans le cas où $F_b$ est un disque épointé, l'action correspond à l'action d'une tresse, comme indiqué dans la figure \ref{tresses}.   
\begin{figure}[htb]
\caption{Action de la tresse $\sigma_2$ \label{tresses}}
\center{
\includegraphics[scale=0.5]{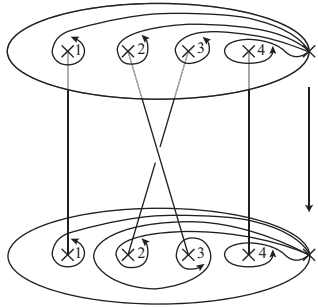}}
\end{figure}

 Soit $X \stackrel{\psi}{\rightarrow} \Pd$ l'éclatement de $\Pd$  en $A$ (comme dans la figure \ref{courbes}) et $E$ le diviseur exceptionnel. On va travailler avec $\tilde{\calR}'=\psi^*\tilde{\calR}$.
Remarquons que $X$ est muni d'une structure de $\Pu$-fibré : $X\stackrel{\pi}{\rightarrow}E$. Soit $P$ le support de la monodromie de $\tilde{\calR}'$, grâce au lemme \ref{lemmquotient} on voit que $E$ n'est pas une composante de $P$.
Les composantes de $P$ sont donc les transformées strictes des composantes du lieu polaire de $\tilde{\calR}$ : trois fibres de $\pi$, à savoir $\ell_1$, $\ell_2$ et $\ell_{\infty}$, et les deux coniques. Soit  $\ell_I$ la fibre de $I$ pour $\pi$. Soit $Q=P \cup \ell_I$, $X^{0}=X \setminus Q$ et $E^{0}=E\setminus Q$, on voit que $\pi_{\vert X^{0}}:X^{0} \rightarrow E^{0}$ est un fibré topologiquement localement trivial de fibre  $\mathbb{S}_4^2$, la sphère privée de quatre points. De plus, ce fibré admet une section, donnée par $E^{0}$; on a ainsi, par le théorème ci-dessus, une description du groupe fondamental de $X^{0}$ comme produit semi-direct :
$\pi_1(X^{0},b)=\pi_1(F_{b},b) \rtimes \pi_1(E^{0},b)$.

Reste à identifier de façon effective les deux facteurs et l'action.  Pour la base on choisit les générateurs $\alpha$, $\beta$, $\gamma$ comme sur la figure \ref{base}, où l'on rapporte les fibres à l'ordonnée $y$ de leur point d'intersection avec $\{x=0\}$.
On choisit pour $F_b$  la transformée stricte de $\{y=4\}$ par l'éclatement de $\Pd$ en $A$. On choisit alors comme générateurs pour $\pi_1(F_{b},b)$ les lacets $t,u,v,w$ comme sur la figure  \ref{fibre}, où l'on utilise l'abscisse $x$ des points comme coordonnée ; la seule relation entre ces lacets est $tuvw=1$. 

 \begin{figure}[htb]
\caption{\label{base} Chemins de la base $E^0$}
\center{
\includegraphics[scale=0.7]{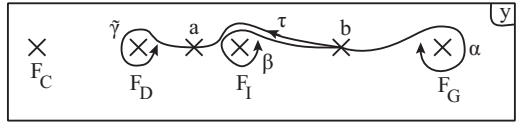}}
\end{figure}
\begin{figure}[htb]
\caption{\label{fibre} Lacets de la fibre $F_b$}
\center{
\includegraphics[scale=0.7]{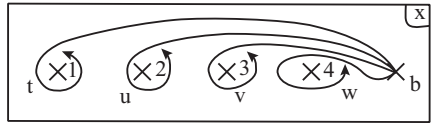}}
\end{figure}
 
On identifie les points $1$, $2$, $3$ et $4$  de $F_b$ avec les points $1$, $2$, $3$ et $4$ du disque épointé  quatre fois (figure \ref{tresses}), ce qui permet d'interpréter l'action de $\alpha$, $\beta$ et $\gamma$ à l'aide de l'action du groupe de tresses $\mathcal{B}_4$. Comme on le voit sur la figure \ref{tresses}, $\mathcal{B}_4$ agit -à droite par morphisme de groupe- sur $(t,u,v,w)$ de la manière suivante :
 \begin{equation}  \label{actionb4} \left \lbrace \begin{array}{l} 
   \  \sigma_1 : (t,u,v,w)\mapsto (tut^{-1},t,v,w)\\
   \ \sigma_2 : (t,u,v,w) \mapsto (t,uvu^{-1},u,w)\\
\  \sigma_3 : (t,u,v,w)\mapsto (t,u,vwv^{-1},v)
 \end{array} \right.
 \end{equation}
 Pour identifier les tresses associées à $\alpha$, $\beta$ et $\gamma$, on doit étudier l'évolution avec $\lambda \in [0,1]$ des positions relatives des éléments de $P\cap F_{y(\lambda)}$, pour différents chemins $y(\lambda)$ dans $E^0$. Notons que si $y(\lambda)$ est à valeurs dans $E^0\cap  ]0,+\infty[$, alors tout point de $P\cap F_{y(\lambda)}$ est réel ; l'étude se réduit alors à la lecture de la figure~\ref{courbes}. Les difficultés proviennent donc des situations où $y(\lambda)$ contourne ou entoure $F_D$, $F_I$ ou $F_G$ et se traitent localement.
 
  Par des changements de variables locaux réels de la forme $(\tilde{x},\tilde{y})=(\psi(x,y),\phi(y))$ avec $\phi$ croissante, on a les descriptions simples suivantes pour $P$. Au voisinage de $G$, $P : \tilde{y}(\tilde{y}+\tilde{x}^2)=0$; au voisinage de $I$,  $P: \tilde{x}(\tilde{x}+\tilde{y}^3)=0$ et au voisinage de $H$, $ P: \tilde{x}\tilde{y}(\tilde{y}-\tilde{x}^2)=0$.
 
De cette façon, on voit que $\alpha$ agit comme la tresse $\sigma_2$, que 
$\beta$ correspond à la tresse $\sigma_3^6$ et que l'action de $\gamma=\tau\tilde{\gamma}\tau^{-1}$ est celle de $\sigma_3^3$ suivie de celle de $\sigma_1\sigma_2\sigma_1$ sur la fibre $F_a$ de $a$, puis enfin suivie de l'action de $\sigma_3^{-3}$.

Puisque des générateurs pour les facteurs sont donnés par les familles $f=(\alpha,\beta,\gamma)$ et $g=(t,u,v,w)$, notre structure de produit semi-direct pour $\pi_1(X^{0},b)$ nous fournit la description par générateurs et relations suivante : $$\pi_1(X^{0},b)=<f \cup g \vert f_i^{-1}g_jf_i= g_j.f_i,tuvw=1>.$$ 
De manière explicite, l'ensemble des relations est le suivant.
 \begin{equation} \label{gen}
 tuvw=1
\end{equation}
 \begin{equation}  \label{rela} \left \lbrace \begin{array}{l} 
\ \alpha^{-1}t\alpha=t\\
\ \alpha^{-1}u\alpha=uvu^{-1}\\
\ \alpha^{-1}v\alpha=u \\
\ \alpha^{-1}w\alpha=w
 \end{array} \right.
 \end{equation}
  \begin{equation}  \label{relb} \left \lbrace \begin{array}{l} 
  \ \beta^{-1}t\beta=t\\
  \ \beta^{-1}u\beta=u\\
 \ \beta^{-1}v\beta=(vw)^{3}v(vw)^{-3}\\
\ \beta^{-1}w\beta=\left((vw)^2v\right)w\left((vw)^2v\right)^{-1} 
\end{array} \right.
 \end{equation}
   \begin{equation}  \label{relg} \left \lbrace \begin{array}{l} 
   
   \ \gamma^{-1}t\gamma=\left(tu(vw)^{-1}\right)w\left(tu(vw)^{-1}\right)^{-1} \\
   \ \gamma^{-1}u\gamma=tut^{-1} \\
\ \gamma^{-1}w\gamma=\left(t(wvw)^{-1}(vw)^2\right)t\left(t(wvw)^{-1}(vw)^2\right)^{-1} \\

\ \gamma^{-1}v\gamma=\left(t(wvw)^{-1}(vw)^2t(wvw)^{-1}\right)v\left(t(wvw)^{-1}(vw)^2t(wvw)^{-1}\right)^{-1} 

\end{array} \right.
 \end{equation}

Par la relation (\ref{gen}) et le fait que  $\pi_1(E^{0},b)$ agit par morphismes de groupe, on peut déduire une des quatre équations de (\ref{rela}), (\ref{relb}) ou (\ref{relg}) des trois autres. On peut donc simplifier la présentation en enlevant une équation de son choix dans chacune des familles de relations (\ref{rela}), (\ref{relb}) et (\ref{relg}), par exemple la plus longue, ce qui donne
l'ensemble de relations suivant, où l'on a aussi appliqué $tu=(vw)^{-1}$ dans la relation  $\gamma^{-1}t\gamma=\left(tu(vw)^{-1}\right)w\left(tu(vw)^{-1}\right)^{-1}$.
  \begin{equation}  \label{relf} \left \lbrace \begin{array}{l} 
  \ tuvw=1\\
  \ \alpha^{-1}t\alpha=t\\
  \ \alpha^{-1}v\alpha=u \\
\ \alpha^{-1}w\alpha=w\\
 \ \beta^{-1}t\beta=t\\
  \ \beta^{-1}u\beta=u\\
 \ \beta^{-1}v\beta=(vw)^{3}v(vw)^{-3}\\
   \ \gamma^{-1}t\gamma=(vw)^{-2}w(vw)^{2} \\
   \ \gamma^{-1}u\gamma=tut^{-1} \\
\ \gamma^{-1}w\gamma=\left(t(wvw)^{-1}(vw)^2\right)t\left(t(wvw)^{-1}(vw)^2\right)^{-1} \\

\end{array} \right.
 \end{equation}

D'après le théorème de Van Kampen sur le groupe fondamental de l'union de sous espaces topologiques, pour avoir une présentation du groupe fondamental de $X^1=X \setminus P$,  il suffit d'ajouter à cette présentation la relation $\beta=1$.
Ce groupe est donc donné par les générateurs $\{\alpha, \gamma , t, u, v, w \}$ et les relations :
  \begin{equation}  \label{relfu} \left \lbrace \begin{array}{l} 
  \ tuvw=1\\
  \ \alpha^{-1}t\alpha=t\\
  \ \alpha^{-1}v\alpha=u \\
\ \alpha^{-1}w\alpha=w\\
 \ v=(vw)^{3}v(vw)^{-3}\\
     \ \gamma^{-1}t\gamma=(vw)^{-2}w(vw)^{2} \\
   \ \gamma^{-1}u\gamma=tut^{-1} \\
\ \gamma^{-1}w\gamma=\left(t(wvw)^{-1}(vw)^2\right)t\left(t(wvw)^{-1}(vw)^2\right)^{-1} \\

\end{array} \right.
 \end{equation}

\subsubsection{Monodromie} \label{monod}
On va décrire la représentation de monodromie réduite $\rho : \pi_1(X^{1},b) \rightarrow \mathrm{PSL}_2(\C)$ associée à $\tilde{\calR}'$.
Il nous suffira de la comprendre en restriction à $\pi_1(F_b)$ et à $\pi_1(E \setminus P)$ pour la comprendre globalement. Remarquons que toute représentation de l'un de ces groupes vers $\mathrm{PSL}_2(\C)$ se relève à $\mathrm{SL}_2(\C)$ puisque ce sont des groupes libres. On pourra ainsi avoir recours à des raisonnements matriciels.

 Dans $\Pd$, le pinceau des droites passant par $A$ est le projeté du pinceau de $\mathbb{P}^1$ de la déformation isomonodromique initiale $(\calR_s)$, en particulier la réduction de la monodromie de $\tilde{\calR}'_{\vert F_b}$ est la même que celle d'un $\calR_s$ (cf section \ref{quotient}).
 Soient $T,U,V,W$ les images respectives de $t,u,v,w$ par un relèvement de $\rho$ à $\mathrm{SL}_2$.

On a vu à la section \ref{monodlin} que, modulo conjugaison globale,  $(V,W,T,U)=\varepsilon\cdot (\phi \cdot(V_0,W_0,T_0,U_0))$ pour un   $\phi \in \mathcal{MCG}(\mathbb{S}_4^2)$ et un changement de signes $\varepsilon \in \{(\varepsilon_i) \in \{\pm 1\}^4 \vert \prod_i \varepsilon_i=1\}$.
De plus, l'action de $\varepsilon \phi$ sur les traces doit être triviale. En particulier, $\phi$ doit induire une permutation de $\{1,2,3,4\}$ de la forme $(23)^k(14)^l$.

Soient $b_{23}$ et $b_{14}$ des éléments de $\mathcal{MCG}(\mathbb{S}_4^2)$ qui induisent respectivement $(23)$ et $(14)$.
Soit $\mathcal{MCG}_{pure}(\mathbb{S}_4^2)$ le noyau de $\mathcal{MCG}(\mathbb{S}_4^2)\rightarrow \textfrak{S}_4$.
L'action de $\mathcal{MCG}(\mathbb{S}_4^2)$  sur les classes de conjugaisons de quadruplets $(V,W,T,U)\in \mathrm{SL}_2$ tels que $TUVW=1$, se décrit par une action polynomiale sur $L=(tr(V),tr(W),tr(T),tr(U),tr(VW),tr(WT),tr(VT))$, cf \cite[p 193]{MR2254812}.
Ainsi, on peut calculer et constater ce qui suit pour le $7$-uplet $L_0$ associé à $(V_0,W_0,T_0,U_0)$.
 $$\mathcal{MCG}_{pure}(\mathbb{S}_4^2)\cdot L_0=b_{14}\mathcal{MCG}_{pure}(\mathbb{S}_4^2)\cdot L_0=\varepsilon^0b_{23}\mathcal{MCG}_{pure}(\mathbb{S}_4^2)\cdot L_0,$$ pour $\varepsilon^0=(+,-,-,+)$; notons que chaque membre compte six éléments, ce qui correspond au degré de notre solution de (PVI).

Ainsi, projectivement et à conjugaison globale près, $$(V,W,T,U) \in \mathcal{MCG}_{pure}(\mathbb{S}_4^2)\cdot (V_0,W_0,T_0,U_0).$$

La monodromie de $\tilde{\calR}'_{\vert E}$ se relève aussi en $\hat{\rho}$ à $\mathrm{SL}_2$. Elle est aisément calculée grâce à la connaissance des exposants : $\theta_{\infty}=0$, $\theta_1=\frac{1}{12}$, $\theta_2=\frac{1}{4}$ correspondant respectivement à $\ell_{\infty}$, $\ell_1$ et $\ell_2$. Soient $A=\hat{\rho}(\alpha)$ et $G=\hat{\rho}(\gamma)$, on a 
$tr(A)=2\cos(\pi\theta_{\infty})$, $tr(G)=2\cos(\pi\theta_1)$ et $tr(AG)=2\cos(\pi\theta_{2})$, ce qui détermine une représentation vers $\mathrm{SL}_2(\C)$, unique  modulo conjugaison et irréductible, d'après \cite{MR1731936}.
Après calcul, elle est donnée par $A= \left[ \begin {array}{cc} 1&0\\\noalign{\medskip}-\sqrt {3}&1
\end {array} \right] $ et \\
$ G= \left[ \begin {array}{cc} \sqrt {2}&\frac{\sqrt {2}}{2}(\sqrt {3}+1)\\\noalign{\medskip}\frac{\sqrt {2}}{2}(5-3\sqrt {3})&\frac{\sqrt {2}}{2}(\sqrt {3}-1)\end {array} \right]$.

En utilisant la deuxième et la quatrième relation de (\ref{relfu}), ainsi que le fait que $T$ soit parabolique, on voit que $T$ et $W$ commutent. Dans l'orbite $\mathcal{MCG}_{pure}(\mathbb{S}_4^2)\cdot (V_0,W_0,T_0,U_0)$, seuls deux quadruplets satisfont cette condition, même projectivement. En utilisant la troisième et la septième relation de (\ref{relfu}), on arrive à voir que l'un d'eux ne peut correspondre à notre représentation  tandis qu'un seul représentant de la classe de conjugaison du second satisfait   (\ref{relfu}), il est décrit ci-dessous. \\
 \begin{minipage}[b]{0.50\linewidth}   
      $U= \left[ \begin{array}{cc} \sqrt {3}-1&1\\\noalign{\medskip}-2+\sqrt{
3}&1\end {array} \right]$; 

   \end{minipage}\hfill
  \hspace{1 cm} \begin{minipage}[b]{0.5\linewidth}    $V=\left[ \begin {array}{cc} -1+2\,\sqrt {3}&1\\\noalign{\medskip}-8+3\,
\sqrt {3}&-\sqrt {3}+1\end {array} \right]$; 
   \end{minipage}
   
     \begin{minipage}[b]{1\linewidth}   
    \hspace{3.3 cm} $W= \left[ \begin {array}{cc} 1&0\\\noalign{\medskip}-\sqrt {3}-1&1
\end {array} \right]$.
\end{minipage}

Le groupe $<A,G,U,V,W>$ est un sous-groupe de $\mathrm{PSL}_2(\mathbb{Z}[\xi])$ où $\xi=2 \cos(\frac{\pi}{12})=\frac{\sqrt{2}}{2}(1+\sqrt{3})$. Notons que $\mathbb{Z}[\xi]$ est l'anneau des entiers de $\mathbb{Q}[\xi]=\mathbb{Q}[\sqrt{2},\sqrt{3}]$.
On remarque aussi que la représentation $\rho$ ne se relève pas à $\mathrm{SL}_2$.

Exactement la même démarche permet de déterminer la monodromie de la structure transverse du feuilletage dual $\G$: vue l'involution, les images de $v$, $w$, $t$, $u$ sont encore donnés par un élément de l'orbite 
$\mathcal{MCG}_{pure}(\mathbb{S}_4^2)\cdot(V_0,W_0,T_0,U_0)$ et une seule trace de monodromie locale change : $\theta_1=\frac{7}{12}$. La représentation obtenue est l'image de la précédente par $\sqrt{3} \mapsto -\sqrt{3}$.

Un examen plus approfondi donne le résultat suivant.
\begin{lemm}\label{monodromie}
\begin{enumerate}
\item
Le sous-groupe $\Gamma$ de $\mathrm{PSL}_2(\C)$  engendré par $U$, $V$, $W$, $A$ et $G$ est une extension de degré $2$ de $\Gamma_{\sqrt{3}}=\mathrm{PSL}_2(\mathbb{Z}[\sqrt{3}])$.

\item Ce groupe coïncide avec le groupe modulaire étendu $\mathrm{PGL}^+(\mathbb{Z}[\sqrt{3}])$ et, par le plongement habituel, c'est un sous groupe discret maximal de $\mathrm{PGL}^+(\R)^2$.
\end{enumerate}
\begin{proof}
\begin{enumerate}
\item Le groupe $\mathrm{PSL}_2(\mathbb{Z}[\sqrt{3}])$ est engendré par ses deux sous-groupes de matrices triangulaires : $T_1:=\left \{ \left [ \begin{array}{cc} 1 & 0\\ u & 1 \end{array}   \right ], u \in \mathbb{Z}[ \sqrt{3}] \right \} $ et
$ T_2:=\left \{ \left [ \begin{array}{cc} 1 & u\\ 0 & 1 \end{array}   \right ], u \in \mathbb{Z}[ \sqrt{3}] \right \} $.
L'isomorphisme  $ \left [ \begin{array}{cc} 1 & 0\\ u & 1 \end{array}   \right ] \mapsto u$ permet d'identifier  $T_1$ à $\mathbb{Z}[\sqrt{3}]$, on voit ainsi facilement que $T_1$ est engendré par les matrices $A$ et $W$. De même, les matrices $H=G^2AG^{-2}=\left [ \begin{array}{cc} 1 & 12+ 7\sqrt{3}\\ 0 & 1 \end{array}   \right ]$ et $F=G^2WG^{-2}=\left [ \begin{array}{cc} 1 & 19+ 11\sqrt{3}\\ 0 & 1 \end{array}   \right ]$ engendrent $T_2$, puisque  $12+ 7\sqrt{3}$ et $19+ 11\sqrt{3}$ sont des générateurs du $\Z$-module $\mathbb{Z}[\sqrt{3}]$ en vertu des identités suivantes :

$1=-11(12+7\sqrt{3})+7(19+ 11\sqrt{3}) ; \sqrt{3}=19(12+7\sqrt{3})-12(19+ 11\sqrt{3}).$
Ainsi, le groupe $<A,G,U,V,W>$ contient $\mathrm{PSL}_2(\mathbb{Z}[\sqrt{3}])$.

 D'autre part, $G$ est le seul générateur non contenu dans $\mathrm{PSL}_2(\mathbb{Z}[\sqrt{3}])$ tandis que $G^2$  l'est, ce qui montre que $<A,G,U,V,W>/\mathrm{PSL}_2(\mathbb{Z}[\sqrt{3}])$ se réduit à deux éléments.

\item Dans un but de concision, on se réfère librement à \cite[I.4]{MR930101}. Dans $\mathrm{PGL}_2(\R)$, la matrice $G$  donne le même élément que la matrice
 $\xi \cdot G$ qui est donnée ci-dessous.

$$ \left[ \begin {array}{cc} \sqrt {3}+1&2+\sqrt {3}
\\ \noalign{\medskip}\sqrt {3}-2&1\end {array} \right] $$
Cette dernière a pour déterminant $\xi^2\in \mathbb{Z}[\sqrt{3}]$ qui est totalement positif, donc $\Gamma$ est un sous groupe de $\hat{\Gamma}_{\sqrt{3}}=\mathrm{PGL}^+(\mathbb{Z}[\sqrt{3}])$.
De plus, $\Gamma_{\sqrt{3}}$ est d'indice $2$ dans $\hat{\Gamma}_{\sqrt{3}}$, \cite[fin de la p. 11]{MR930101}; comme on a les inclusions

$\Gamma_{\sqrt{3}}<\Gamma<\hat{\Gamma}_{\sqrt{3}}$, on en déduit $\Gamma=\hat{\Gamma}_{\sqrt{3}}$.
D'après \cite{MR930101}, il existe, parmi les sous groupes discrets de $\mathrm{PGL}^+(\R)^2$ contenant $\hat{\Gamma}_{\sqrt{3}}$, un élément maximal $\Gamma_{HM}$ : l'extension de Hurwitz-Maass de $\Gamma_{\sqrt{3}}$. L'indice de $\Gamma_{\sqrt{3}}$ dans $\Gamma_{HM}$ est donné par $2^{t-1}$ où $t$ est le nombre de facteurs premiers distincts du discriminant $D=12$ de $\Q(\sqrt{3})$, cf \cite[p $13$]{MR930101}.
Ainsi $2^{t-1}=2$ et $\Gamma_{HM}$ coïncide avec $\hat{\Gamma}_{\sqrt{3}}$.

\end{enumerate}
\end{proof}
\end{lemm}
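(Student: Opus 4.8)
The plan is to prove the two assertions by sandwiching $\Gamma=\langle U,V,W,A,G\rangle$ between two explicit arithmetic subgroups of $\mathrm{PGL}_2(\R)$ whose relative index I already know, and then reading off the maximality from the classification of overgroups of $\Gamma_{\sqrt{3}}$. In both steps the key input is a generation statement for $\mathrm{PSL}_2(\mathbb{Z}[\sqrt{3}])$ together with a small amount of index bookkeeping, so no deep new ingredient is needed beyond the explicit matrices $A,G,U,V,W$ and the structure theory of $\cite{MR930101}$.

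For part (1), I would first record that $\mathrm{PSL}_2(\mathbb{Z}[\sqrt{3}])$ is generated by its two unipotent triangular subgroups $T_1$ (lower) and $T_2$ (upper); this is the statement that $\mathrm{SL}_2$ of the ring is generated by elementary matrices, valid because $\mathbb{Z}[\sqrt{3}]=\mathcal{O}_{\Q(\sqrt{3})}$ is norm-Euclidean. Identifying $T_1$ with $(\mathbb{Z}[\sqrt{3}],+)$ by the lower-left entry, I would note that the lower-left entries of $A$ and $W$, namely $-\sqrt{3}$ and $-\sqrt{3}-1$, differ by $-1$ and hence generate $\mathbb{Z}[\sqrt{3}]$ as a $\Z$-module, so $A,W$ generate $T_1$. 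For $T_2$ I would conjugate by $G^2$: a direct computation yields unipotent upper-triangular matrices $H=G^2AG^{-2}$ and $F=G^2WG^{-2}$ with upper-right entries $12+7\sqrt{3}$ and $19+11\sqrt{3}$, and the Bézout identities $1=-11(12+7\sqrt{3})+7(19+11\sqrt{3})$ and $\sqrt{3}=19(12+7\sqrt{3})-12(19+11\sqrt{3})$ show these entries generate $\mathbb{Z}[\sqrt{3}]$, so $H,F$ generate $T_2$. Thus $\Gamma\supseteq\langle T_1,T_2\rangle=\Gamma_{\sqrt{3}}$. To finish, I would observe that $A,W,U,V$ already have entries in $\mathbb{Z}[\sqrt{3}]$ and so lie in $\Gamma_{\sqrt{3}}$, while $G\notin\Gamma_{\sqrt{3}}$ (its entries genuinely involve $\sqrt{2}$) but $G^2\in\Gamma_{\sqrt{3}}$; this forces the quotient $\Gamma/\Gamma_{\sqrt{3}}$ to consist of exactly two cosets, which is the asserted degree-$2$ extension.

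For part (2), the strategy is to place $\Gamma$ inside $\hat{\Gamma}_{\sqrt{3}}=\mathrm{PGL}^+(\mathbb{Z}[\sqrt{3}])$ and compare indices. Since $G$ and $\xi\cdot G$ represent the same element of $\mathrm{PGL}_2(\R)$, and $\xi\cdot G$ has entries in $\mathbb{Z}[\sqrt{3}]$ with determinant $\xi^2=2+\sqrt{3}$ totally positive, the generator $G$ lies in $\hat{\Gamma}_{\sqrt{3}}$; the other generators already lie in $\Gamma_{\sqrt{3}}\subset\hat{\Gamma}_{\sqrt{3}}$. Hence $\Gamma_{\sqrt{3}}<\Gamma<\hat{\Gamma}_{\sqrt{3}}$. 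Using $[\hat{\Gamma}_{\sqrt{3}}:\Gamma_{\sqrt{3}}]=2$ from $\cite{MR930101}$ together with $[\Gamma:\Gamma_{\sqrt{3}}]=2$ from part (1), the sandwich collapses and gives $\Gamma=\hat{\Gamma}_{\sqrt{3}}$.

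Finally, for maximality I would invoke the Hurwitz--Maass extension $\Gamma_{HM}$, the maximal discrete subgroup of $\mathrm{PGL}^+(\R)^2$ containing $\Gamma_{\sqrt{3}}$, whose index over $\Gamma_{\sqrt{3}}$ equals $2^{t-1}$, where $t$ is the number of distinct prime divisors of the discriminant $D=12$ of $\Q(\sqrt{3})$, cf. $\cite{MR930101}$. From $12=2^2\cdot 3$ one gets $t=2$, so $2^{t-1}=2=[\hat{\Gamma}_{\sqrt{3}}:\Gamma_{\sqrt{3}}]$, whence $\Gamma_{HM}=\hat{\Gamma}_{\sqrt{3}}=\Gamma$ and $\Gamma$ is maximal. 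I expect the main obstacle to be the generation-by-triangular-subgroups step and the verification that $H,F$ have the displayed unipotent form; the remaining arguments are index comparisons. The one point that requires genuine care is checking, inside $\mathrm{PSL}_2$, that no unit scaling clears the $\sqrt{2}$ from $G$ (so $G\notin\Gamma_{\sqrt{3}}$) while $G^2$ does have $\mathbb{Z}[\sqrt{3}]$-entries.
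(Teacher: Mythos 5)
Your proposal is correct and follows essentially the same route as the paper: generation of $\Gamma_{\sqrt{3}}$ by the triangular subgroups, with $T_1=\langle A,W\rangle$ and $T_2=\langle G^2AG^{-2},G^2WG^{-2}\rangle$ via the same B\'ezout identities, then the sandwich $\Gamma_{\sqrt{3}}<\Gamma<\mathrm{PGL}^+(\mathbb{Z}[\sqrt{3}])$ using $\xi\cdot G$, and the Hurwitz--Maass index $2^{t-1}=2$ for $D=12$. The only (welcome) additions are your explicit justification of elementary generation via the norm-Euclidean property of $\mathbb{Z}[\sqrt{3}]$ and the computation $\xi^2=2+\sqrt{3}$, which the paper leaves implicit.
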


\subsection{La surface modulaire associée à $\Z[\sqrt{3}]$}
On déduit de notre travail suffisamment d'informations sur la surface $Z_{\sqrt{3}}$.
\begin{theo}\label{racine}
Un modèle birationnel de la surface modulaire bifeuilletée $(Z_{\sqrt{3}},\F_{\sqrt{3}},\G_{\sqrt{3}})$ est $(\Pd,\F_{\omega_1},\G_{\tau_1})$
où \\ \footnotesize
$\omega_1=6\, \left( 3\,{v}^{2}+1 \right) v \left( {v}^{2}+9\,u{v}^{2}+3\,u
 \right) du \\+ \left(  \left( 9\,u-5 \right)  \left( 9
\,u-2 \right)  \left( 9\,u-1 \right) {v}^{4}+9\,u \left( 5+54\,{u}^{2}
-30\,u \right) {v}^{2}+9\,{u}^{2} \left( 9\,u-2 \right)  \right) dv.$

$\tau_1=6\, \left( 3\,{v}^{2}+1 \right) v \left( -8\,{v}^{2}-3+36\,u{v}^{2}+12
\,u \right) du \\+ \left(  \left( 9\,u-5 \right) 
 \left( 9\,u+1 \right)  \left( 9\,u-1 \right) {v}^{4}+ \left( 3+486\,{
u}^{3}-432\,{u}^{2}+45\,u \right) {v}^{2}+9\,u \left( 9\,u-2 \right) 
 \left( u-1 \right)  \right) dv.$
\normalsize

De plus,  $\sigma_1 : (u,v)\mapsto  \left( {\frac {3\,{v}^{2} \left( 36\,{v}^{2}+13 \right) u-{v}^{2} \left( 20\,{v}^{2}+9 \right) }{9\, \left( 12\,{v}^{2}-1 \right) \left( 3\,{v}^{2}+1 \right)u  -3\,{v}^{2} \left( 36\,{v}^{2}+13 \right) }},v \right)$ est une involution birationnelle de $\Pd$ qui échange $\F_{\omega_{1}}$ et $\G_{\tau_{1}}$.
\begin{proof}
D'après le calcul de la monodromie des structures transverses de $\F$ et $\G$ et le corollaire \ref{coro1}, on voit que l'on obtient la surface $(Y_{\sqrt{3}},\F_{\sqrt{3}},\G_{\sqrt{3}})$ comme revêtement double de  $(Y_{\Gamma},\F_{\Gamma},\G_{\Gamma})$. 
Soit $\pi : Y_{\sqrt{3}}\rightarrow Y_{\Gamma}$ le revêtement en question.

Soit $X$ l'éclatement de $\Pd$ en $A$ et $E$ le diviseur exceptionnel, comme dans la section \ref{monod}. 
La fin de la désingularisation de $\F$ donne un morphisme $\phi : Z_{\Gamma} \rightarrow X$ et on note $ \psi : Y_{\Gamma} \tilde{\dasharrow} Z_{\Gamma}$ la désingularisation de $Y_{\Gamma}$.
Soient $P$ le lieu invariant du feuilletage induit par $\F$ sur $X$, $X^1=X\setminus (P\cup E)$, $Y_{\sqrt{3}}^1=Y_{\sqrt{3}} \setminus (\pi \circ \psi \circ \phi)^*(P\cup E)$.

La composée $\pi \circ \psi \circ \phi$ induit un revêtement étale $\pi^1 : Y_{\sqrt{3}}^1 \rightarrow X^1$ qui est déterminé par sa représentation de monodromie : $$ \pi_1(X^1)\stackrel{\varepsilon} {\rightarrow}\Gamma / \mathrm{PSL}_2(\Z[\sqrt{3}])=\Z/2\Z.$$

 D'après le lemme \ref{monodromie}, on voit que $ p : \Pu \times \Pu \dasharrow X,(u,v)\mapsto (u,\frac{-v^2}{3v^2+1})$ induit en restriction à $X^1$ un revêtement étale qui a exactement $\varepsilon$ pour monodromie.
Ainsi on a un biholomorphisme $T$ entre $Y_{\sqrt{3}}^1$ et $\Pu \times \Pu\setminus p^*(P\cup E)$, tel que $p \circ T=\pi^1$. Comme $p^{-1}\circ \pi^1$ est une correspondance algébrique, on en déduit que $T$ se prolonge en une transformation birationnelle, de sorte que le diagramme suivant commute.

$$\xymatrix{Y_{\sqrt{3}} \ar@{-->}[rrd]^{\pi^1} \ar[d]_\pi \ar@{-->}[rr]^T& & \Pu \times \Pu    \ar@{-->}[d]^p  \\
Y_{\Gamma} \ar@{-->}[r]_\psi &Z_{\Gamma}  \ar[r]_\phi & X }$$
Ainsi, en pratique, le revêtement double qu'on utilise est $p$, ce qui permet de déduire le résultat.   
\end{proof}
\end{theo}
Notons que $Z_{\sqrt{3}}$ est un "revêtement" intermédiaire entre le $\Pu \times \Pu$ de la déformation isomonodromique initiale $\calR$ et $Z_{\Gamma}$ puisque
$v(s)={\frac {-2\sqrt{-1}s}{{s}^{2}+3}}$ satisfait \scalebox{0.87}{$\frac{-v(s)^2}{3v(s)^2+1}={\frac {4{s}^{2}}{ \left( {s}^{2}-3 \right) ^{2}}}=r(s)$.}

\section{Conclusion}
Dans une perspective plus générale, on peut se demander quelles solutions (algébriques) de l'équation de Painlevé VI donnent des exemples intéressants de feuilletages. %et comment le groupe d'Okamoto agit sur cette propriété.
L'intérêt qu'on porte à un feuilletage de Riccati $\calR$ sur $\Pu \times X \rightarrow X$ peut provenir de la richesse de son groupe de monodromie (disons sa Zariski densité) et du fait qu'il ne soit pas birationnellement équivalent à un tiré en arrière d'un feuilletage de Riccati au dessus d'une courbe (notamment en vue du résultat susmentionné de Corlette-Simpson). 
%Donnons l'action du groupe d'Okamoto sur les déformations isomonodromiques $(\calR_s)$ associées aux solutions de (PVI).
%\begin{enumerate}
%\item Les transformations qui n'agissent que par changement de signes des paramètres de la solution de (PVI) agissent trivialement sur $(\calR_s)$.
%\item Certaines agissent par des transformations de jauge méromorphes de $\Pu \times \Pu \times \calC \rightarrow \Pu \times \calC$ qui sont holomorphes en dehors des pôles $0,1,t,\infty$.
%\item Certaines agissent par permutations des pôles : \\$<[0,1,t,\infty] \mapsto [ 0,1,1-t,\infty], [0,1,t,\infty] \mapsto [ 0,1,1/t,\infty]>$.
%\item La symétrie spéciale  $s_\delta$ d'Okamoto a une action non triviale qui ne s'interprète pas géométriquement dans ce contexte.
%\end{enumerate}
%Soit $H$ le groupe engendré par les transformations des types $1-3$ et $G$ le groupe d'Okamoto, engendré par les transformations $1-4$.
%Le groupe $H$ agit par transformations birationnelles, ce qui ne change pas les propriétés géométriques de $(\calR_s)$ et des feuilletages induits par le choix de sections de $\Pu \times \Pu \times \calC \rightarrow \Pu \times \calC$.
%Par contre, la transformation $s_{\delta}$ perturbe ces propriétés : le groupe de monodromie de $(\calR_s)$ est en général modifié par $s_{\delta}$ et la propriété de pull-back n'est pas respectée.

%On déduit aisément de \cite[section $3$]{MR2254812} que $H$ est un d'indice $3$ dans $G$ ; chaque solution de la liste de \cite{LiTy} donne trois solutions modulo $H$ et donc au plus trois feuilletages de Riccati (modulo transformations birationnelles).    
Signalons que notre exemple montre que l'action du groupe d'Okamoto perturbe ces deux propriétés. En effet, la monodromie du feuilletage de Riccati $\mathcal{H}$ associé à la solution tétrahédrale n°$6$ de Boalch est un groupe fini et, pour cette raison, $\mathcal{H}$ est le tiré en arrière d'un feuilletage de Riccati au dessus d'une courbe.

Dans le cas où $\calR$ n'est pas obtenu par tiré en arrière, Corlette et Simpson montrent que la monodromie de $\calR$ est définie sur un anneau d'entiers et l'estimation du degré de son corps de fraction permet d'estimer la dimension d'un "polydisk Shimura D-M stack" par lequel la représentation factorise.
 Dans un prochain travail, on donnera une étude systématique de ces propriétés pour les solutions algébriques de la liste \cite{LiTy} et leurs transformations d'Okamoto. 
%D'autre part, le choix de la section de $\Pu \times X \rightarrow X$ est déterminant pour les propriétés numériques du feuilletage $\F$ induit par $\calR$.

\bibliographystyle{smfalpha}
\bibliography{mabiblio}

\end{document}